\newcommand{\RS}[1]{\textcolor{red}{#1}}
\newtheorem{thm}{Theorem}[section]
\newtheorem{Lemma}[thm]{Lemma}
\newcommand{\R}{\mathbb{R}}
\newcommand{\C}{\mathbb{C}}
\newcommand{\Z}{\mathbb{Z}}
\newcommand{\N}{\mathbb{N}}
\newcommand{\E}{\mathcal{E}}
\newcommand{\Sp}{\mathcal{S}}
\newcommand{\D}{\Delta_\E}
\DeclareMathOperator{\sn}{sn}
\DeclareMathOperator{\cn}{cn}
\DeclareMathOperator{\dn}{dn}
\DeclareMathOperator{\am}{am}
\numberwithin{equation}{section}
\begin{document}

\allowdisplaybreaks

\newcommand{\arXivNumber}{2312.01620}

\renewcommand{\PaperNumber}{067}

\FirstPageHeading

\ShortArticleName{The Laplace--Beltrami Operator on the Surface of the Ellipsoid}

\ArticleName{The Laplace--Beltrami Operator\\ on the Surface of the Ellipsoid}

\Author{Hans VOLKMER}

\AuthorNameForHeading{H.~Volkmer}

\Address{Department of Mathematical Sciences, University of Wisconsin--Milwaukee, USA}
\Email{\href{mailto:volkmer@uwm.edu}{volkmer@uwm.edu}}

\ArticleDates{Received December 04, 2023, in final form July 10, 2024; Published online July 25, 2024}

\Abstract{The Laplace--Beltrami operator on (the surface of) a triaxial ellipsoid admits a~sequence of real eigenvalues diverging to plus infinity. By introducing ellipsoidal coordinates, this eigenvalue problem for a partial differential operator is reduced to a two-parameter regular Sturm--Liouville problem involving ordinary differential operators. This two-parameter eigenvalue problem has two families of eigencurves whose intersection points determine the eigenvalues of the Laplace--Beltrami operator. Eigenvalues are approximated numerically through eigenvalues of generalized matrix eigenvalue problems. Ellipsoids close to spheres are studied employing Lam\'e polynomials.}

\Keywords{Laplace--Beltrami operator; triaxial ellipsoid; two-parameter Sturm--Liouville problem; generalized matrix eigenvalue problem; eigencurves}

\Classification{34B30; 34L15}

\section{Introduction}

In 1839, Lam\'e \cite{L} showed that the Laplace equation
\[ \frac{\partial^2u}{\partial x^2}+\frac{\partial^2 u}{\partial y^2}+\frac{\partial^2 u}{\partial z^2}=0\]
after transformation to ellipsoidal coordinates $\alpha$, $\beta$, $\gamma$ \cite[Section~1.6]{A} can be solved by the method of separation of variables.
The orthogonal coordinate surfaces of ellipsoidal coordinates are ellipsoids, hyperboloids of one sheet and
hyperboloids of two sheets; all confocal.
Lam\'e obtained solutions of product form
\begin{equation*}
 u(x,y,z)=w_1(\alpha)w_2(\beta)w_3(\gamma),
\end{equation*}
where $w_1$, $w_2$, $w_3$ all satisfy the Lam\'e differential equation
\begin{equation}\label{lame1}
\frac{{\rm d}^2w}{{\rm d}t^2}+\bigl(h-n(n+1)k^2 \sn^2(t,k)\bigr) w=0.
\end{equation}
This equation contains the Jacobian elliptic function $\sn(t,k)$ with modulus $k\in(0,1)$
and two separation constants $h$ and $n(n+1)$.
If $n=0,1,2,\dots$, then equation \eqref{lame1} admits Lam\'e polynomial solutions \cite[Chapter~IX]{A}.
Lam\'e polynomials are nontrivial solutions of \eqref{lame1} that are polynomials in $\sn(t,k), \cn(t,k)$ and $\dn(t,k)$.
For given $n\in\N_0$, there are $2n+1$ special values of~$h$ such that \eqref{lame1} admits a Lam\'e polynomial solution.
If we choose one of these special solutions for $w_1$, $w_2$, $w_3$, then $u$ becomes a harmonic polynomial in the variables $x$, $y$, $z$ also known as an ellipsoidal harmonic.
For the theory of ellipsoidal harmonics, we refer to \cite{D, H, Ho}.

We also mention Mathieu's work \cite{M} on the vibrating elliptic membrane; see also \cite[Section~4.31]{MS} and \cite[p.~4]{Vbook}.
The eigenvalues $\lambda$ are determined from the equation $-\Delta u=\lambda u$ for the two-dimensional Laplace operator $\Delta$ under Dirichlet boundary conditions on an ellipse.
This problem can be solved by the method of separation of variables after transformation to planar elliptic coordinates.
We obtain two Mathieu equations coupled by the eigenvalue parameter~$\lambda$ and an additional separation parameter.

The goal of this paper is to demonstrate that following the method used in these examples we can find all eigenvalues and eigenfunctions of
the Laplace--Beltrami operator $\D$ on the ellipsoid~$\E$ defined by
\begin{equation}\label{ellipsoid}
 \frac{x^2}{a^2}+\frac{y^2}{b^2}+\frac{z^2}{c^2}=1,\qquad a>b>c>0.
 \end{equation}
Here we consider the ellipsoid $\E$ as a two-dimensional analytic Riemannian manifold \cite{J}.
We show that the eigenvalue equation
\begin{equation}\label{problem}
 -\D u=\lambda u
 \end{equation}
after transformation to ellipsoidal coordinates (now denoted by $s$, $t$) can be solved by the method of separation of variables.
We find eigenfunctions of product form
\begin{equation}\label{product2}
 u= v(s) w(t).
\end{equation}
The function $w$ is now a solution of the differential equation
\begin{equation}\label{lame2}
\frac{{\rm d}}{{\rm d}t}\biggl(\frac{1}{q(t)}\frac{{\rm d}w}{{\rm d}t}\biggr)+q(t)\bigl(h-\lambda k^2\sn^2(t,k)\bigr)w=0,
\end{equation}
where \smash{$q(t):=\bigl(a^2\cn^2(t,k)+b^2\sn^2(t,k)\bigr)^{1/2}$},
and $v$ satisfies a similar equation.
We notice that equation \eqref{lame2} reduces to Lam\'e's equation \eqref{lame1}
when $a=b=1$ and $\lambda=n(n+1)$.
We obtain a two-parameter Sturm--Liouville eigenvalue problem
involving two ordinary differential equations
that is fully coupled. This means that the eigenvalue parameter $\lambda$ and the separation parameter $h$
appear in both differential equations. To such a system we
apply the general theory of two-parameter eigenvalue problems as can be found in the books
\cite{AM} and \cite{Vbook}. Since we have only two parameters, the eigenvalues $\lambda$ of $-\D$ are then determined by intersection points of corresponding eigencurves.

The eigenfunctions of the form \eqref{product2} when properly normalized will give us an orthonormal basis
for the Hilbert space $L^2(\E)$.
To the best knowledge of the author, this treatment of equation~\eqref{problem}
is new. Also the treatment of equation \eqref{lame2} appears to be new.

One should point out an important difference between the Laplace--Beltrami operator~$\Delta_\Sp$ on the unit sphere $\Sp$ in $\R^3$ and
the Laplace--Beltrami operator $\Delta_\E$ on the ellipsoid $\E$.
If~$r$,~$\theta$,~$\phi$ denote spherical coordinates in $\R^3$ and $w(\theta,\phi)$ is a spherical harmonic of degree $n\in\N_0$ (that is, $-\Delta_\Sp w=n(n+1)w$), then $r^nw(\theta,\phi)$ is a solution of $\Delta u=0$.
This shows a close connection between the Laplacian in $\R^3$ and $\Delta_\Sp$.
There is no similar relationship between the Laplacian in $\R^3$ and $\Delta_\E$ when we use ellipsoidal coordinates. In particular,
there is no direct relationship between ellipsoidal harmonics and eigenfunctions of~$\Delta_\E$.

In a previous paper \cite{V}, the author studied the eigenvalues of the Laplace--Beltrami operator on a spheroid, i.e., an ellipsoid of revolution.
The eigenvalues of the Laplace--Beltrami operator on a surface of revolution in $\R^3$ can always be found by the method of separation of variables.
We obtain a system of two ordinary differential equation coupled by two parameters, however, in the first equation only
one parameter appears so that this (actually trivial) eigenvalue problem can be solved first.
We are then left with a classical Sturm--Liouville problem for the second equation.
Such a ``weakly'' coupled system is much easier to treat than the fully coupled one.
We point out that it is not possible to simply set $c=b$ in this paper and obtain results from \cite{V}.
In order to obtain results from \cite{V}, we have to consider the limiting case $c\to b$ with $a$, $b$ being fixed.
The precise justification of the limiting process is a delicate one and is not considered in this paper.

We refer to paper \cite{EK} that studies the eigenvalue problem \eqref{problem} when the ellipsoid $\E$ is close to a sphere.
We also mention the related papers \cite{BF} and \cite{MT}.
For some modern applications of eigenfunction expansions for Laplace--Beltrami operators, we refer to the introduction of \cite{EK}.
We add possible applications of fractional powers of Laplace--Beltrami operators to contemporary studies involving ``non-local''
partial differential equations
\cite{GGL}, and also the use of Laplace--Beltrami eigenfunctions expansions on a boundary $\partial\Omega$ in the theory of boundary triples
for linear partial differential operators inside a domain $\Omega$ \cite[Theorem 3.2]{KZ}.
Also note the explicit use of Laplace--Beltrami eigenfunctions expansions in \cite[Section~1 and Theorems 2.2--2.4]{EKa}.

This paper is organized as follows. In Section \ref{coords}, we consider the algebraic and transcendental form of ellipsoidal coordinates for the ellipsoid~$\E$.
The transcendental form has the advantage that it leads us to regular Sturm--Liouville problems
whereas the algebraic form gives us singular Sturm--Liouville problems.
In Section \ref{LB}, we transform equation \eqref{problem}
to ellipsoidal coordinates
and show that it ``separates'', that is, it allows for solutions of the form \eqref{product2}.
In Section \ref{twopara}, we obtain and investigate the two-parameter Sturm--Liouville problem that
completely determines the eigenvalues and eigenfunctions of $-\D$ of even parity.
We show in Theorem \ref{t2} that the system of these eigenfunctions is complete in the Hilbert space of square-integrable functions on~$\E$ with even parity.
In Section \ref{other}, we consider the corresponding two-parameter problems for the seven other parities.
The system of all eigenfunctions is then complete in the Hilbert space of all square-integrable functions on $\E$.

In Section \ref{sphere}, we collect some known results for the Laplace--Beltrami operator on the sphere
in terms of sphero-conal coordinates that will be useful in subsequent sections.
In Section \ref{prufer}, we mention that the two-parameter eigenvalue problem
can be simplified by the Pr\"ufer transformation.
In Section \ref{matrix}, we present an efficient method to compute the eigenvalue of~$-\D$ numerically.
The eigencurves are approximated by the eigenvalues of finite matrix problems.
Finally, in Section \ref{sphere2}, we consider ellipsoids which are close to the unit sphere.
This allows us to make a~connection to the results in~\cite{EK}.

\section{Ellipsoidal coordinates}\label{coords}

We consider the ellipsoid $\E$ given by \eqref{ellipsoid}.
On the ellipsoid $\E$ we introduce (algebraic) ellipsoidal coordinates $\mu$, $\nu$ \cite[Section~15.1.1]{EMO3} by
\begin{gather}
x=a k\mu^{1/2}\nu^{1/2},\label{ell1}\\
y= b k (k')^{-1}(\mu-1)^{1/2}(1-\nu)^{1/2},\label{ell2}\\
z=c (k')^{-1}\bigl(1-k^2\mu\bigr)^{1/2}\bigl(1-k^2\nu\bigr)^{1/2},\label{ell3}
\end{gather}
where $k,k'\in(0,1)$ are determined by
\begin{equation}\label{k}
 k^2=\frac{a^2-b^2}{a^2-c^2},\qquad k'^2=1-k^2= \frac{b^2-c^2}{a^2-c^2}.
 \end{equation}
There is a one-to-one correspondence between the rectangle $0<\nu<1<\mu<k^{-2}$ and
\[ \E_+:=\{(x,y,z)\in\E\colon x,y,z>0\} .\]
There is also a one-to-one correspondence between the closed rectangle ${0\le\nu\le 1\le \mu\le k^{-2}}$
and the closure $\bar{\E}_+$ of $\E_+$.
Indeed, if we start at $(\mu,\nu)=(1,0)$ and move along the boundary of the rectangle, then
$(x,y,z)$ moves along the boundary of $\E_+$ starting at $(0,0,c)$. The points $(\mu,\nu)=(1,1)$, $(\mu,\nu)=\bigl(k^{-2},1\bigr)$, $(\mu,\nu)=\bigl(k^{-2},0\bigr)$ correspond to
$(x,y,z)=(ak,0,ck')$, $(x,y,z)=(a,0,0)$, $(x,y,z)=(0,b,0)$, respectively.
The functions $\mu$ and $\nu$ can now be extended to continuous functions on $\E$ with parity $(0,0,0)$,
that is, functions that satisfy
\[ f(x,y,z)=f(-x,y,z)=f(x,-y,z)=f(x,y,-z)\qquad \text{for all} \quad (x,y,z)\in\E.\]
We note that these functions $\mu$ and $\nu$ also satisfy
\begin{equation}\label{munu}
\mu\nu=a^{-2}k^{-2} x^2,\qquad \mu+\nu=1+a^{-2}k^{-2} x^2+b^{-2}k^{-2}(k')^2y^2 .
\end{equation}
It follows that $\mu\nu$ and $\mu+\nu$ are analytic functions on $\E$. Therefore, every symmetric polynomial
in~$\mu$,~$\nu$ is also an analytic function on $\E$. However, $\mu$, $\nu$ are not analytic on $\E$.
In fact, the functions~$\mu$,~$\nu$ are analytic at every point of $\E$ except the four points
with $x=\pm ak$, $y=0$. This follows from the observation that we can solve \eqref{munu} for $\mu$, $\nu$
locally at $(x_0,y_0)$ by analytic functions of $x$, $y$ except when $\mu(x_0,y_0,z_0)=\nu(x_0,y_0,z_0)$.
But $\mu=\nu$ only happens when $\nu=\mu=1$ and this happens only at the four points $(x_0,y_0,z_0)=(\pm ak,0,\pm ck')$.

\begin{Lemma}\label{l1}
Let $f$ be an analytic function defined on an open region $D$ in the complex plane containing the interval $\bigl[0,k^{-2}\bigr]$.
Then $f(\mu)f(\nu)$ is an analytic function on $\E$.
\end{Lemma}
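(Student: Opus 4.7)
The plan is to reduce the analyticity of $f(\mu)f(\nu)$ to the analyticity of the elementary symmetric functions $\mu+\nu$ and $\mu\nu$ on $\E$, which was already established in the discussion preceding the lemma.

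First I would fix a point $P_0\in\E$ and distinguish two cases. If $\mu(P_0)\neq\nu(P_0)$, then both $\mu$ and $\nu$ are individually real analytic at $P_0$, as noted in the text; their values lie in $[0,k^{-2}]\subset D$, where $f$ is holomorphic, so the compositions $f\circ\mu$ and $f\circ\nu$ are real analytic (complex-valued) at $P_0$, and hence their product is real analytic there.

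The remaining case is $\mu(P_0)=\nu(P_0)$; by the discussion above the lemma, this forces $\mu(P_0)=\nu(P_0)=1$, so $P_0$ is one of the four exceptional points $(\pm ak,0,\pm ck')$. Near such a $P_0$ I would exploit the symmetry of $f(u)f(v)$ in $(u,v)$. Expand $f(z)=\sum_{n\ge 0}a_n(z-1)^n$ on some disk $|z-1|<r$. Setting $\sigma:=(u-1)+(v-1)$ and $\tau:=(u-1)(v-1)$, Newton's identities express each power sum $(u-1)^k+(v-1)^k$ as a polynomial $\Pi_k(\sigma,\tau)$ with integer coefficients, and symmetrizing the double series yields
\[
f(u)f(v)=\sum_{m\ge 0}a_m^2\tau^m+\sum_{0\le m<n}a_ma_n\tau^m\Pi_{n-m}(\sigma,\tau)=:G(\sigma,\tau).
\]

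Next I would check that $G$ defines a genuine holomorphic function of $(\sigma,\tau)$ on a neighborhood of the origin in $\C^2$. For arbitrary complex $(\sigma,\tau)$, the roots $u-1,v-1$ of $T^2-\sigma T+\tau$ satisfy $|u-1|,|v-1|\le R:=\tfrac{1}{2}\bigl(|\sigma|+\sqrt{|\sigma|^2+4|\tau|}\,\bigr)$, so Cauchy's estimate $|a_n|\le M\rho^{-n}$ (for any $\rho<r$) gives the bound $\sum_{m,n}|a_ma_n|\,|u-1|^m|v-1|^n\le M^2(1-R/\rho)^{-2}$ as soon as $R<\rho$. This absolute convergence legitimizes the rearrangement and shows that $G$ is holomorphic in a neighborhood of $(0,0)$. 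Since the functions $(\mu+\nu)-2$ and $\mu\nu-(\mu+\nu)+1$ are real analytic on $\E$ and vanish at $P_0$, composition gives $f(\mu)f(\nu)=G\bigl((\mu+\nu)-2,\,\mu\nu-(\mu+\nu)+1\bigr)$, which is real analytic at $P_0$. The main obstacle throughout is precisely this convergence/rearrangement step: one must justify that the symmetrized double series defines an honest holomorphic function of $(\sigma,\tau)$ near the origin rather than merely a formal power series; once this symmetric-function descent is in place, the composition with the analytic functions $\mu+\nu$ and $\mu\nu$ disposes of the four exceptional points.
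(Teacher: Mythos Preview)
Your proof is correct and follows essentially the same approach as the paper: both expand $f$ in a power series about $1$, observe that $f(\mu)f(\nu)$ is a convergent combination of symmetric polynomials in $\mu,\nu$, and then invoke the previously established analyticity of symmetric polynomials in $\mu,\nu$ on $\E$. The paper groups the double series by total degree and appeals directly to locally uniform convergence of the resulting series of analytic functions on $\E$, whereas you make the descent to the elementary symmetric functions $(\sigma,\tau)$ explicit and verify holomorphy of $G$ via root bounds; the underlying idea is the same.
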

\begin{proof}
We already saw that $f(\mu)f(\nu)$ is analytic on $\E$ with the possible exception of the four points
$(x_0,y_0,z_0)=(\pm ak,0,\pm ck')$.
In order to show that $f(\mu)f(\nu)$ is also analytic at these four points, it will be
sufficient to show analyticity at $(x_0,y_0,z_0)=(ak,0,ck')$.
The function $f$ admits a power series expansion
\[ f(\xi)=\sum_{n=0}^\infty c_n (\xi-1)^n \]
for $\xi$ close to $1$.
Then
\begin{equation}\label{ff}
 f(\xi)f(\eta)=\sum_{n=0}^\infty f_n(\xi,\eta),\qquad f_n(\xi,\eta):=\sum_{m=0}^n c_mc_{n-m} (\xi-1)^m(\eta-1)^{n-m}
 \end{equation}
for $\xi$ and $\eta$ close to $1$.
The functions $f_n(\xi,\eta)$ are symmetric polynomials in $\xi,\eta$. Therefore, the functions
$f_n(\mu,\nu)$ are analytic in $x$, $y$
for $(x,y)$ close to $(x_0,y_0)$, where we allow complex values of $x$, $y$.
Since the expansion in \eqref{ff} converges locally uniformly in a neighborhood of $(x_0,y_0)$, we obtain
that $f(\mu)f(\nu)$ is analytic at $(x_0,y_0,z_0)$.
\end{proof}

The metric tensor of ellipsoidal coordinates $\mu\in\bigl(1,k^{-2}\bigr)$, $\nu\in(0,1)$ on $\E_+$ is given by
\begin{gather}
 g_1 := g_{11}=(\mu-\nu) F(\mu),\label{gg1}\\
 g_2 := g_{22}=-(\mu-\nu) F(\nu),\label{gg2}\\
 g_{12} = 0,\label{gg3}
\end{gather}
where
\[ F(\mu):=\frac{\bigl(a^2-b^2\bigr)\mu-a^2}{4\mu(\mu-1)\bigl(\mu-k^{-2}\bigr)} .\]
In particular, this shows that these coordinates are orthogonal.

Let $\sn(\xi,k)$, $\cn(\xi,k)$ and $\dn(\xi,k)$ denote the Jacobian elliptic functions
corresponding to the modulus $k\in(0,1)$ given by \eqref{k}.
These functions have period $4K$, where $K$ is the complete elliptic integral
\[ K=K(k)=\int_0^{\pi/2} \frac{{\rm d}\phi}{\bigl(1-k^2\sin^2\phi\bigr)^{1/2}}.\]
We will also use Jacobian elliptic functions corresponding to the modulus $k'=\sqrt{1-k^2}$ and the
complete elliptic integral $K'=K(k')$.
By setting
\[ \mu=k^{-2}\dn^2(s,k'),\qquad \nu=\sn^2(t,k),\qquad s\in(0,K'),\quad t\in(0,K),\]
we introduce transcendental ellipsoidal coordinates $s$, $t$ in $\E_+$, so
\begin{gather*}
x = a\dn(s,k')\sn(t,k),\qquad
y = b\cn(s,k')\cn(t,k),\qquad
z = c\sn(s,k')\dn(t,k).
\end{gather*}
When we allow $s\in(-K',K')$ and $t\in(-2K,2K]$, then the ellipsoidal coordinates $(s,t)$ are in one-to-one correspondence
with the points of the ellipsoid $\E$ with the exception of two cuts passing through the north pole $(0,0,c)$ and south pole
$(0,0,-c)$ given by $s=K'$ and $s=-K'$, respectively, that is,
\[ x=ak\sn(t,k),\qquad y=0,\qquad z =\pm c \dn(t,k),\qquad t\in[-K,K] .\]
Figure \ref{fig1} depicts the ellipsoid with semi-axes $a=3$, $b=2$, $c=1$ in bird's eye view.
Coordinate lines $s=\frac13mK'$, $m=0,1,2$ and $t=\frac13 nK$, $n=-5,-4,\dots,5,6$
are shown in red and green, respectively. The branch cut is shown in blue.

\begin{figure}[t]
\centering
\includegraphics[width=10cm]{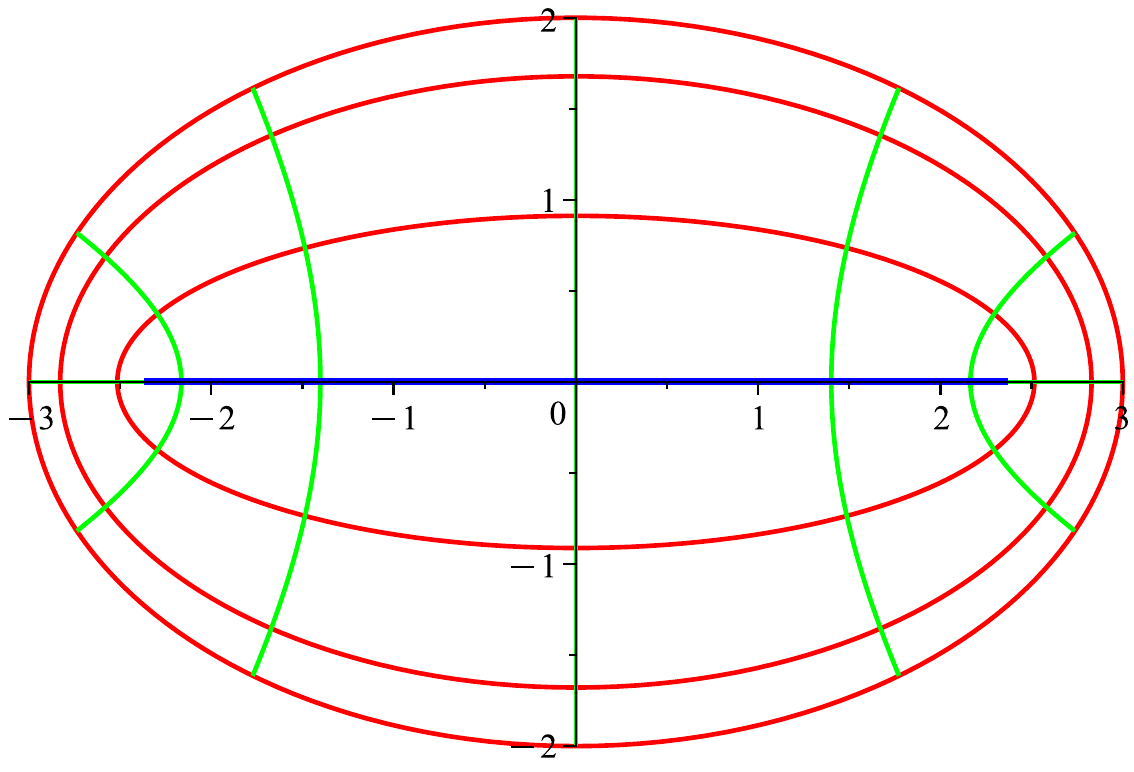}
\caption{Coordinate lines of ellipsoidal coordinates\label{fig1}.}
\end{figure}

From \eqref{gg1}--\eqref{gg3}, we find the metric tensor in terms of the coordinates $s$, $t$
\begin{gather}
 g_1 = \bigl(c^2\cn^2(s,k')+b^2\sn^2(s,k')\bigr)\bigl(\dn^2(s,k')-k^2\sn^2(t,k)\bigr),\label{g1}\\
 g_2 = \bigl(a^2\cn^2(t,k)+b^2\sn^2(t,k)\bigr)\bigl(\dn^2(s,k')-k^2\sn^2(t,k)\bigr),\label{g2}\\
 g_{12} = 0.
\end{gather}

\section{The Laplace--Beltrami operator}\label{LB}

In terms of the coordinates $\mu$ and $\nu$, the Laplace--Beltrami operator $\D$ on the ellipsoid $\E$ is given~by
\begin{align*}
 \D u&{}=(g_1g_2)^{-1/2}\biggl(\frac{\partial}{\partial \mu}\biggl(g_2^{1/2}g_1^{-1/2}
\frac{\partial u}{\partial \mu}\biggr)+\frac{\partial}{\partial \nu}\biggl(g_1^{1/2}g_2^{-1/2}
\frac{\partial u}{\partial \nu}\biggr) \biggr)\\
&{}=\frac1{\mu-\nu}\biggl(\frac{1}{F(\mu)^{1/2}}\frac{\partial}{\partial \mu}\biggl(\frac{1}{F(\mu)^{1/2}}\frac{\partial u}{\partial \mu}
\biggr)
+\frac{1}{(-F(\nu))^{1/2}}\frac{\partial}{\partial \nu}\biggl(\frac{1}{(-F(\nu))^{1/2}}\frac{\partial u}{\partial \nu}\biggr)\biggr).
\end{align*}
In the eigenvalue equation $-\D u=\lambda u$, we can separate variables as follows.
If $v(\mu)$ is a solution of the differential equation
\begin{equation}\label{ode1}
\frac{{\rm d}}{{\rm d}\mu}\biggl(\frac{1}{F(\mu)^{1/2}}\frac{{\rm d}v}{{\rm d}\mu}\biggr)+F(\mu)^{1/2}\bigl(\lambda\mu-hk^{-2}\bigr)v =0,\qquad 1<\mu<k^{-2},
\end{equation}
and $w(\nu)$ is a solution of the differential equation
\begin{equation}\label{ode2}
\frac{{\rm d}}{{\rm d}\nu}\biggl(\frac{1}{(-F(\nu))^{1/2}}\frac{{\rm d}w}{{\rm d}\nu}\biggr)-(-F(\nu))^{1/2}\bigl(\lambda \nu-hk^{-2}\bigr) w =0,\qquad 0<\nu<1,
\end{equation}
then $u(\mu,\nu)=v(\mu)w(\nu)$ is a solution of $-\D u=\lambda u$ on $\E_+$. In these equations $h\in\R$ denotes the separation parameter.

Equation \eqref{ode1} can be written in the form
\begin{equation}\label{ode3}
\frac{{\rm d}^2v}{{\rm d}\mu^2}+\frac12\biggl(\frac1\mu+\frac1{\mu-1}+\frac1{\mu-k^{-2}}-\frac1{\mu-d}\biggr)
\frac{{\rm d}v}{{\rm d}\mu}+\bigl(\lambda \mu-hk^{-2}\bigr)F(\mu) v=0,
\end{equation}
where
\[ d:=\frac{a^2}{a^2-b^2}>k^{-2}.\]
Equation \eqref{ode2} can be written in exactly the same form with $\nu$ replacing $\mu$ and $w$ replacing~$v$.
Therefore, the differential equations \eqref{ode1} and \eqref{ode2} are actually the same equation but they are considered on different intervals.
It is interesting to note that equation \eqref{ode3} has five singularities at $0$, $1$, $k^{-2}$, $d$ and $\infty$.
The first four are regular singularities \cite[Sections~5 and~4]{O} with indices $\{0,\frac12\}$, $\{0,\frac12\}$, $\{0,\frac12\}$, $\{0,\frac32\}$
respectively but $\infty$ is an irregular singularity unless $\lambda=0$. Equation~\eqref{ode3} can be considered as an extension of
the Lam\'e equation \eqref{lame1} in its algebraic form \cite[Section~29.2.2]{dlmf}. The Lam\'e equation has only four singularities at $0$, $1$, $k^{-2}$, $\infty$ and all of them are regular. The additional singularity $d$ appears in connection with the Laplace--Beltrami operator on the ellipsoid and makes the differential equation more difficult to treat.

Since the singularities $0$, $1$, $k^{-2}$ are regular and each has index $0$, equation \eqref{ode3}
admits nontrivial Fuchs--Frobenius power series solutions in powers of $\mu$, $\mu-1$ and $\mu-k^{-2}$.
Usually, these solutions are not connected by analytic continuation.
We are interested in the exceptional case that all three solutions are connected.

\begin{Lemma}\label{l2}
Let the parameters $\lambda$, $h$ be such that differential equation \eqref{ode3}
admits a nontrivial analytic solution $f(\mu)$ on an open region $D$ containing the interval $\bigl[0,k^{-2}\bigr]$.
Then the function $u$ on $\E$ defined by $f(\mu)f(\nu)$ is an analytic function on $\E$ with parity $(0,0,0)$ and it is an eigenfunction of $-\D$
corresponding to the eigenvalue $\lambda$.
\end{Lemma}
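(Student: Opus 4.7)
The plan is to break the statement into three parts: analyticity of $u$ on $\E$, the parity assertion, and the eigenfunction relation. The first two parts follow almost directly from what is already established, while the third part requires a separation-of-variables computation on $\E_+$ together with an analytic-continuation argument to pass to all of $\E$.

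First I would handle analyticity. Since $f$ is analytic on an open region $D$ containing $\bigl[0,k^{-2}\bigr]$, Lemma~\ref{l1} applied to $f$ immediately gives that $u=f(\mu)f(\nu)$ is analytic on $\E$. The parity statement is also immediate: the functions $\mu$ and $\nu$ were shown in Section~\ref{coords} to have parity $(0,0,0)$ on $\E$, so any function $F(\mu,\nu)$, and in particular~$u$, inherits parity $(0,0,0)$.

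Next I would verify the eigenfunction identity on the open octant $\E_+$. The key observation is that equations \eqref{ode1} and \eqref{ode2} are the same ODE \eqref{ode3} considered on the two intervals $\bigl(1,k^{-2}\bigr)$ and $(0,1)$, both of which lie inside $D$. Hence $v(\mu):=f(\mu)$ solves \eqref{ode1} on $\bigl(1,k^{-2}\bigr)$ and $w(\nu):=f(\nu)$ solves \eqref{ode2} on $(0,1)$. The separation-of-variables calculation displayed just before Lemma~\ref{l2}, applied to $u=v(\mu)w(\nu)$, then yields $-\D u=\lambda u$ pointwise on $\E_+$.

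Finally I would extend the identity from $\E_+$ to all of $\E$. The ellipsoid $\E$ is a connected real-analytic Riemannian manifold, so $\D$ is a second-order differential operator with real-analytic coefficients in any analytic local chart. Combined with the analyticity of $u$ established in the first step, this makes both $\D u$ and $\lambda u$ real-analytic functions on $\E$. Since they agree on the nonempty open set $\E_+$, the identity theorem for real-analytic functions on a connected analytic manifold forces $-\D u=\lambda u$ everywhere on $\E$.

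The only subtle step is controlling $u$ at the four exceptional points $(\pm ak,0,\pm ck')$ where the ellipsoidal coordinate system degenerates ($\mu=\nu=1$); this is precisely what Lemma~\ref{l1} was designed to handle, and once analyticity of $u$ on all of $\E$ is in hand, the analytic-continuation argument is routine. There are no delicate estimates or constructions left to perform.
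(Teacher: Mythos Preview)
Your proof is correct and covers the same three points as the paper's argument: analyticity via Lemma~\ref{l1}, the parity claim, and the eigenfunction identity. The one genuine difference is in how you pass from $\E_+$ to all of $\E$. The paper argues by symmetry: since both $u$ and $\D u$ have parity $(0,0,0)$, the equation $-\D u=\lambda u$, once established on $\E_+$, automatically holds on each of the eight open octants, and then continuity (both sides being analytic, hence continuous) carries it across the coordinate curves. You instead invoke the identity theorem for real-analytic functions on a connected analytic manifold. Both arguments are sound; the parity route is more elementary and avoids appealing to analyticity of $\D u$, while your analytic-continuation route is more robust in that it would work even in the absence of the reflection symmetries. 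Either way the proof is short and the content is the same.
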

\begin{proof}
It follows from Lemma \ref{l1} that $u$ is analytic on $\E$.
By our derivation, we know that~$u$ solves $-\D u=\lambda u$ on $\E_+$.
Both $u$ and $\D u$ have the same parity $(0,0,0)$ so $-\D u=\lambda u$ holds~on~$\E$.
\end{proof}

Note that in the situation of Lemma \ref{l2} the region $D$ can be taken as the entire complex plane with the branch cut $[d,\infty)$ removed.

\section{A two-parameter Sturm--Liouville problem}\label{twopara}

Our goal is to prove existence of pairs $\lambda$, $h$ as indicated in Lemma \ref{l2}, and to show
that the corresponding $\lambda$'s comprise all eigenvalues of $-\D$ having eigenfunction with parity $(0,0,0)$.
To that purpose, we prefer to use the transcendental ellipsoidal coordinates $s$, $t$ because then we arrive at
a regular two-parameter Sturm--Liouville system.
If we use the coordinates~$\mu$ and~$\nu$, then our Sturm--Liouville problems have singular endpoints which
makes theoretical and numerical work more difficult.

We set $\mu=k^{-2}\dn^2(s,k')$ and $\nu=\sn^2(t,k)$ in \eqref{ode3}, and, after some computations, we obtain
\begin{gather}\label{SL1}
\frac{{\rm d}}{{\rm d}s}\biggl(\frac{1}{p(s)}\frac{{\rm d}v}{{\rm d}s}\biggr)+p(s)\bigl(-h+\lambda \dn^2(s,k')\bigr) v=0,\\
\frac{{\rm d}}{{\rm d}t}\biggl(\frac{1}{q(t)}\frac{{\rm d}w}{{\rm d}t}\biggr)+q(t)\bigl(h-\lambda k^2\sn^2(t,k)\bigr)w=0, \label{SL2}
\end{gather}
where
\begin{gather}
 p(s) := \bigl(c^2\cn^2(s,k')+b^2\sn^2(s,k')\bigr)^{1/2},\label{p}\\
 q(t) := \bigl(a^2\cn^2(t,k)+b^2\sn^2(t,k)\bigr)^{1/2}\label{q}.
\end{gather}
When we substitute $t=K+ {\rm i}K'-{\rm i}s$, $s\in\R$, and use the identities
\[ k\sn(t,k)=\dn(s,k'),\qquad k\cn(t,k)=-{\rm i}k'\cn(s,k'),\qquad \dn(t,k)=k'\sn(s,k'),\]
we find that $p(s)=q(t)$ and equation \eqref{SL1} transforms to \eqref{SL2}.
So we may say that \eqref{SL1} and~\eqref{SL2} are the same differential equations but considered on different intervals
in the complex plane.
Note also that equation \eqref{SL1} can be obtained from \eqref{SL2} by interchanging $a$ with $c$ (which automatically
interchanges $k$ with $k'$) and replacing $h$ by $\lambda-h$.
If we put $q(t)=1$, then \eqref{SL2} is the Lam\'e equation \eqref{lame1} in its standard form, so \eqref{SL2} can be seen as a generalization of the Lam\'e equation.
We add the Neumann boundary conditions
\begin{gather}\label{bc1}
v'(0) = v'(K')=0, \\
w'(0) = w'(K)=0.\label{bc2}
\end{gather}
They
correspond to the conditions that $v$ as a function of $\mu$ is a Fuchs--Frobenius solution of \eqref{ode3} at $\mu=1$ and $\mu=k^{-2}$ belonging to the indices $0$, and $w$ as a function of $\nu$ is a Fuchs--Frobenius solution of \eqref{ode3}
at $\nu=0$ and $\nu=1$ belonging to the indices $0$.

For every fixed real $\lambda$, equation \eqref{SL1} subject to boundary conditions \eqref{bc1} poses
a regular Sturm--Liouville problem \cite[Section~27]{W} with spectral parameter $-h$.
Similarly, for every fixed real $\lambda$, \eqref{SL2} subject to boundary conditions \eqref{bc2} poses
a regular Sturm--Liouville problem with spectral parameter $h$.
If we combine these problems, we obtain a regular two-parameter Sturm--Liouville problem \cite{AM}.
A pair $\lambda$, $h$ is called an
{\it eigenvalue} of this problem if there exist a~nontrivial solution $v(s)$, $0\le s\le K'$, of~\eqref{SL1} and~\eqref{bc1}
and a nontrivial solution $w(t)$, $0\le t\le K$, of~\eqref{SL2} and~\eqref{bc2}.
If $\lambda$, $h$ is an eigenvalue of this two-parameter Sturm--Liouville problem, then Lemma~\ref{l2}
shows that $\lambda$ is an eigenvalue of $-\D$ having an eigenfunction of parity $(0,0,0)$.

If $v(s)$ is a solution of equation \eqref{SL1}, then $v(s+2K')$ and $v(-s)$ are also solutions.
Therefore, a solution $v(s)$ of \eqref{SL1} satisfies the boundary conditions \eqref{bc1} if and only if
it is even and has period $2K'$. A similar remark applies to equation \eqref{SL2}.
Taking into account the substitution $t=K+{\rm i}K'-{\rm i}s$, we see that $\lambda$, $h$ is an eigenvalue pair if and only if equation \eqref{SL2}
admits a~nontrivial analytic solution $w(t)$ defined on the union $S_1\cup S_2$ of two strips
\[ S_1:=\{t\in\C\colon |{\operatorname{Im}  t}|<\delta\},\qquad S_2=\{t\in\C\colon |{\operatorname{Re}  t-K}|<\delta\}\]
for some sufficiently small positive $\delta$ such that $w(t)$ has period $2K$ on $S_1$, period $2{\rm i}K'$ on $S_2$ and, in addition, is even with respect to $K$, that is, $w(t)=w(2K-t)$. So the eigenvalue problem consists in finding
doubly-periodic solutions of \eqref{SL2}.

For every $\lambda\in\R$, the eigenvalues of the regular Sturm--Liouville problem \eqref{SL2} and \eqref{bc2}
form an increasing sequence
\[ h_0(\lambda)<h_1(\lambda)<h_2(\lambda)<\cdots ,\]
which diverges to $\infty$.
The subscript $n$ of $h_n(\lambda)$ denotes the number of zeros of a corresponding eigenfunction $w_n(t)$ in the interval $(0,K)$.
The graph of the eigenvalue functions $h_n(\lambda)$ are called {\it eigencurves} \cite[Section~6]{AM}.
Similarly, equation \eqref{SL1} (with $h$ replaced by $H$) subject to conditions~\eqref{bc1}
generates eigenvalue functions
\[ H_0(\lambda)>H_1(\lambda)>H_2(\lambda)>\cdots,\]
where the subscript $m$ of $H_m(\lambda)$ denotes the number of zeros of an eigenfunction $v_m(s)$ in the interval $(0,K')$.
The eigenvalues $\lambda$, $h$ of the two-parameter eigenvalue problem are exactly the intersection points of
eigencurves $H_m(\lambda)$ and $h_n(\lambda)$, $m,n\in\N_0$.
A few eigencurves are shown in~Figure~\ref{fig2} in~Section~\ref{matrix}.

We mention some properties of eigencurves proved in \cite{BV}.
We cannot directly apply the results from \cite{BV} to \eqref{SL1} and \eqref{SL2}
because in \cite{BV} it is assumed that the factor of $h$ is identically one. However, we can transform
this factor to $1$ by a substitution of the independent variable. For example, in \eqref{SL2} we
introduce a new variable $\tau$ by setting
\[ \frac{{\rm d}\tau}{{\rm d}t}=q(t) .\]
Then \eqref{SL2} becomes
\begin{equation}\label{ode6}
 -\frac{{\rm d}^2w}{{\rm d}\tau^2}=(h+\lambda r(\tau)) w,\qquad r(\tau)=-k^2\sn^2(t,k).
 \end{equation}
Equation \eqref{ode6} has the form treated in \cite{BV}. Actually, equation \eqref{ode6} looks simpler than \eqref{SL2}
but, unfortunately, we do not have an explicit formula for $r(\tau)$ in terms of $\tau$.

By \cite[Theorem 2.1]{BV}, $H_m(\lambda)$ and $h_n(\lambda)$ are analytic functions of $\lambda$. Their first derivatives \cite[formula~(2.5)]{BV} are given by
\begin{gather*}
H_m'(\lambda)=\frac{\int_0^{K'} p(s)\dn^2(s,k') v_m(s)^2\,{\rm d}s}{\int_0^{K'} p(s)v_m(s)^2\,{\rm d}s},\qquad
h_n'(\lambda)=\frac{\int_0^{K} q(t)k^2\sn^2(t,k) w_n(t)^2\,{\rm d}t}{\int_0^{K} q(t)w_n(t)^2\,{\rm d}t}.
\end{gather*}
It follows from $k^2<\dn^2(s,k')<1$ on $(0,K')$ and $0<\sn^2(t,k)<1$ on $(0,K)$ that
\begin{equation}\label{estder}
0<h_n'(\lambda)<k^2<H_m'(\lambda)<1\qquad \text{for all} \quad \lambda\in\R.
\end{equation}
Regarding the asymptotic behavior of the eigencurves as $\lambda\to\infty$, we have from \cite[Theorem 2.2]{BV}%
\begin{equation}\label{asy}
\lim_{\lambda\to\infty} \frac{H_m(\lambda)}{\lambda}=1,\qquad \lim_{\lambda\to\infty} \frac{h_n(\lambda)}{\lambda}=0.
\end{equation}
If $\lambda=0$, then we can solve \eqref{SL1} and \eqref{SL2} explicitly.
The solution of \eqref{SL1} with $\lambda=0$ and initial conditions $v(0)=1$, $v'(0)=0$ is
\[ v(s)=\cos\biggl((-h)^{1/2}\int_0^s p(\sigma)\,{\rm d}\sigma\biggr).\]
Using this formula and a similar one for \eqref{SL2}, we obtain
\begin{gather}\label{hm0}
 H_m(0)=-m^2\pi^2 \biggl(\int_0^{K'} p(s)\,{\rm d}s\biggr)^{-2} \qquad \text{for}\quad m\in\N_0,\\
h_n(0)=n^2\pi^2 \biggl(\int_0^{K} q(t)\,{\rm d}t\biggr)^{-2} \qquad \text{for} \quad n\in\N_0.\label{Hn0}
\end{gather}

\begin{thm}\label{t1}
For every pair $m,n\in\N_0$, there is exactly one intersection point $(\lambda_{m,n},h_{m,n})$ of the
eigencurves $H_m(\lambda)$ and $h_n(\lambda)$. We have $\lambda_{0,0}=h_{0,0}=0$ and $\lambda_{m,n}>0$, $h_{m,n}>0$ for all $(m,n)\ne (0,0)$.
\end{thm}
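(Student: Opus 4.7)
The plan is to reduce the two-variable intersection problem to a single-variable monotonicity argument by studying $f(\lambda):=H_m(\lambda)-h_n(\lambda)$ on $\R$. An intersection point of the eigencurves $H_m$ and $h_n$ is exactly a zero of $f$, and the results from \cite{BV} cited above guarantee that $f$ is real-analytic on $\R$.

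The key observation is strict monotonicity: by the pointwise bounds \eqref{estder},
\begin{equation*}
 f'(\lambda)=H_m'(\lambda)-h_n'(\lambda)>k^2-k^2=0\qquad \text{for every } \lambda\in\R .
\end{equation*}
So $f$ is strictly increasing on $\R$, which immediately yields uniqueness of any intersection point for every $(m,n)$.

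For existence and location I would read off the behaviour at the two ends. From \eqref{hm0}--\eqref{Hn0}, $H_m(0)\le 0$ and $h_n(0)\ge 0$, with equality in the first precisely when $m=0$ and in the second precisely when $n=0$. Hence $f(0)\le 0$, with equality exactly when $(m,n)=(0,0)$. At the other end, the asymptotics \eqref{asy} give $f(\lambda)/\lambda\to 1$, so $f(\lambda)\to+\infty$ as $\lambda\to+\infty$. Combining this with continuity, the intermediate value theorem and strict monotonicity produce a unique real zero $\lambda_{m,n}$ of $f$: when $(m,n)=(0,0)$ the value $f(0)=0$ forces $\lambda_{0,0}=0$, hence $h_{0,0}=h_0(0)=0$; when $(m,n)\ne(0,0)$ we have $f(0)<0$, and strict monotonicity then gives $\lambda_{m,n}>0$.

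Finally, to see $h_{m,n}=h_n(\lambda_{m,n})>0$ when $(m,n)\ne(0,0)$, I would again invoke \eqref{estder}, which says $h_n$ is itself strictly increasing. If $n\ge 1$, then $h_n(\lambda_{m,n})>h_n(0)>0$; if $n=0$ (so $m\ge 1$), then $\lambda_{m,0}>0$ yields $h_0(\lambda_{m,0})>h_0(0)=0$. I do not expect a genuine obstacle here: all the nontrivial input (analyticity of the eigencurves, the pointwise derivative bounds, the linear growth asymptotics, and the explicit values at $\lambda=0$) has already been supplied by the machinery set up before the theorem, and what remains is a purely one-variable calculus argument.
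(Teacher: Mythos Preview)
Your proof is correct and follows essentially the same route as the paper's: both use \eqref{estder} for uniqueness (strict monotonicity of $H_m-h_n$), the explicit values \eqref{hm0}--\eqref{Hn0} at $\lambda=0$ together with the asymptotics \eqref{asy} for existence, and then read off the signs of $\lambda_{m,n}$ and $h_{m,n}$. Your write-up is simply more detailed than the paper's terse version, in particular spelling out why $h_{m,n}>0$ via the monotonicity of $h_n$.
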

\begin{proof}
Let $m,n\in\N_0$. It follows from \eqref{estder} that there is at most one $\lambda$ such that ${H_m(\lambda)\!=\!h_n(\lambda)}$.
By \eqref{hm0} and~\eqref{Hn0} we know that $H_m(0)\le 0\le h_n(0)$. Therefore, by \eqref{asy}, there exists $\lambda$ such that $H_m(\lambda)=h_n(\lambda)$. If $m=n=0$, then $\lambda=0$ and $H_0(0)=h_0(0)=0$. Otherwise, $\lambda>0$ and~$h_n(\lambda)>0$.
\end{proof}

Our two-parameter Sturm--Liouville problem is right-definite, that is,
\setlength{\arraycolsep}{1mm}
\[ D(s,t):=\begin{vmatrix} p(s)\dn^2(s,k') & -p(s) \\ -q(t) k^2\sn^2(t,k) & q(t)\end{vmatrix}=p(s)q(t)\bigl(\dn^2(s,k')-k^2\sn^2(t,k)\bigr)>0 \]
for all $s\in[0,K']$ and $t\in[0,K]$ with the exception of $s=K'$, $t=K$, where the determinant vanishes.
It is well known \cite[Theorem 5.5.1]{AM} that right-definiteness implies the existence and uniqueness of the intersection points $(\lambda_{m,n},h_{m,n})$ of eigencurves as stated in Theorem \ref{t1}.

Let $v_{m,n}(s)$ and $w_{m,n}(t)$ denote real-valued eigenfunctions
satisfying \eqref{SL1}, \eqref{SL2}, \eqref{bc1}, \eqref{bc2} for $\lambda=\lambda_{m,n}$ and $h=h_{m,n}$.
It is easy to prove \cite[Section 3.5]{AM} that the system
of products
\[ u_{m,n}(s,t):=v_{m,n}(s)w_{m,n}(t),\qquad m,n\in\N_0,\]
is orthogonal with respect to the
inner product
\[
\langle f,g\rangle=\int_0^K \int_0^{K'} D(s,t) f(s,t)\overline{g(s,t)}\,{\rm d}s{\rm d}t.
\]
We notice that
\[ D(s,t)=g_1^{1/2}g_2^{1/2} \]
if $g_1$ and $g_2$ are expressed in ellipsoidal coordinates $s$, $t$ according to \eqref{g1} and~\eqref{g2}.
This shows that the eigenfunctions $u_{m,n}$ of $-\D$ are orthogonal
in $L^2(\E)$ because the surface measure of $\E$ has the form ${\rm d}S =g_1^{1/2}g_2^{1/2}\,{\rm d}s {\rm d}t$.
We normalize the functions $u_{m,n}$ so that they have norm~$1$ in $L^2(\E)$.

We have the following completeness result.

\begin{thm}\label{t2}\quad
\begin{itemize}\itemsep=0pt
\item[$(i)$] The double sequence $u_{m,n}$, $m,n\in\N_0$,
forms an orthonormal basis for the subspace of~$L^2(\E)$ consisting of functions with parity $(0,0,0)$.

\item[$(ii)$]
Every number $\lambda_{m,n}$, $m,n\in\N_0$, is an eigenvalue of the Laplace--Beltrami operator $-\D$
with corresponding eigenfunction $u_{m,n}$ of parity $(0,0,0)$.
Every eigenvalue of $-\D$ with an eigenfunction of parity $(0,0,0)$ is equal to one of the $\lambda_{m,n}$
\end{itemize}
\end{thm}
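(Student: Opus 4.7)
The plan is to split the theorem into an existence half of (ii) and a completeness half that simultaneously yields (i) and the converse in (ii). The existence half produces, for each pair $(m,n)$, a genuine $(0,0,0)$-parity eigenfunction of $-\D$ with eigenvalue $\lambda_{m,n}$ by invoking Lemma \ref{l2}. The completeness half invokes the general theory of right-definite two-parameter Sturm--Liouville problems, combined with a unitary change of variables from the parity-$(0,0,0)$ subspace of $L^2(\E)$ to a weighted $L^2$-space on the rectangle $[0,K']\times[0,K]$.

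For existence, I would rewrite $v_{m,n}(s)=V(\mu)$ with $\mu=k^{-2}\dn^2(s,k')$, and $w_{m,n}(t)=W(\nu)$ with $\nu=\sn^2(t,k)$. By the remark made just after the Neumann conditions \eqref{bc1}, \eqref{bc2} were introduced, these conditions translate into $V$ being the index-$0$ Fuchs--Frobenius solution of the common equation \eqref{ode3} at $\mu=1$ and $\mu=k^{-2}$, and $W$ the index-$0$ solution at $\mu=0$ and $\mu=1$. Uniqueness up to a scalar of the index-$0$ Frobenius solution at each regular singularity then gives that $V$ extends holomorphically to an open neighborhood of $[1,k^{-2}]$ and $W$ to one of $[0,1]$, and that at the overlap $\mu=1$ the two are proportional. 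The values $V(1)=v_{m,n}(K')$ and $W(1)=w_{m,n}(K)$ cannot vanish, for otherwise the Neumann conditions would force the Sturm--Liouville solutions to be identically zero; so after a rescaling $V$ and $W$ agree with a single analytic function $f$ on a common neighborhood, and $f$ continues analytically to any simply connected region $D$ containing $[0,k^{-2}]$ and avoiding the remaining singularities, for example $D=\C\setminus[d,\infty)$. Lemma \ref{l2} then produces the analytic parity-$(0,0,0)$ eigenfunction $f(\mu)f(\nu)$, which on $\bar{\E}_+$ coincides with a nonzero scalar multiple of $u_{m,n}$.

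For completeness, right-definiteness has already been verified in the paragraph preceding the theorem, since $D(s,t)>0$ on $[0,K']\times[0,K]$ with the exception only of the corner $s=K'$, $t=K$. The general theory in \cite{AM} then implies that the normalized products $u_{m,n}$ form an orthonormal basis of the weighted space $L^2\bigl([0,K']\times[0,K],\,D(s,t)\,{\rm d}s\,{\rm d}t\bigr)$. The map sending $u\in L^2(\E)$ of parity $(0,0,0)$ to $2\sqrt{2}\,u|_{\bar{\E}_+}$ expressed in ellipsoidal coordinates is a unitary isomorphism onto this weighted space, since the surface element satisfies ${\rm d}S=g_1^{1/2}g_2^{1/2}\,{\rm d}s\,{\rm d}t=D(s,t)\,{\rm d}s\,{\rm d}t$ and each of the eight octants of $\E$ contributes equally under the parity symmetry. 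This proves (i). For the converse in (ii), any parity-$(0,0,0)$ eigenfunction $u$ of $-\D$ with eigenvalue $\lambda$ expands as $u=\sum_{m,n}c_{m,n}u_{m,n}$, and the self-adjointness of $-\D$ on the compact Riemannian manifold $\E$ yields $\lambda c_{m,n}=\lambda_{m,n}c_{m,n}$ for every pair $(m,n)$; since $u\neq 0$, some $c_{m,n}$ is nonzero and hence $\lambda$ is one of the $\lambda_{m,n}$.

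The main obstacle is the analytic-continuation step: reconstructing a single holomorphic function $f(\mu)$ on a neighborhood of $[0,k^{-2}]$ out of the two real-variable eigenfunctions $v_{m,n}$ and $w_{m,n}$, which a priori live only on the disjoint real intervals corresponding to $\mu\in[1,k^{-2}]$ and $\mu\in[0,1]$. This hinges on correctly interpreting the Neumann conditions as the index-$0$ Frobenius characterization at the three regular singularities $\mu=0,1,k^{-2}$ and on the resulting uniqueness up to a scalar; once that is in place, Lemmas \ref{l1} and \ref{l2} together with the general two-parameter completeness theorem deliver the result.
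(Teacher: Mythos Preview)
Your proposal is correct and follows essentially the same route as the paper: Lemma~\ref{l2} for the existence half of (ii), the right-definite two-parameter completeness theorem for (i), and orthogonality of eigenfunctions for the converse in (ii). The paper's own proof is much terser---it simply cites \cite[Theorem~6.8.3]{Vbook} for (i) and disposes of (ii) in two lines---whereas you spell out two steps the paper leaves implicit: the analytic-continuation argument gluing $V$ and $W$ at the common regular singularity $\mu=1$ via the uniqueness of the index-$0$ Frobenius solution, and the explicit unitary identification (with the factor $2\sqrt{2}$) between the parity-$(0,0,0)$ subspace of $L^2(\E)$ and the weighted $L^2$ on the rectangle. Your identification of the gluing step as the main obstacle is accurate; the paper handles it only by the earlier remark that the Neumann conditions correspond to index-$0$ Frobenius behavior at $0$, $1$, $k^{-2}$, together with the observation that \eqref{SL1} and \eqref{SL2} are the same equation on different intervals.
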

\begin{proof}
(i) This follows from \cite[Theorem 6.8.3]{Vbook}.

(ii)
We already mentioned that each $\lambda_{m,n}$ is an eigenvalue of $-\D$ with eigenfunction $u_{m,n}$.
If~$-\D$ had an eigenvalue $\lambda$ with an eigenfunction $u$ of parity $(0,0,0)$ such that
$\lambda$ is different from any $\lambda_{m,n}$, then $u$ would be orthogonal to each $u_{m,n}$, and this is impossible by
Theorem~\ref{t2}\,(i).
\end{proof}

\section{Eigenfunctions of other parities}\label{other}

Let $\kappa_1,\kappa_2,\kappa_3\in\{0,1\}$. We say that a function $f\colon \E\to\C$ has parity $(\kappa_1,\kappa_2,\kappa_3)$
if
\[ f(x,y,z)=(-1)^{\kappa_1}f(-x,y,z)=(-1)^{\kappa_2}f(x,-y,z)=(-1)^{\kappa_3}f(x,y,-z).\]
Since the Laplace--Beltrami operator $\D$ leaves the parity of a function invariant, it is
sufficient to look for eigenvalues of $-\D$ with eigenfunctions of a given parity.
This splits the eigenvalue problem for $-\D$ into eight subproblems.
We already proved some results on the eigenvalues of $-\D$ with eigenfunctions of parity $(0,0,0)$.
We now mention analogous results for the other seven parities.

The differential equations \eqref{ode3}, \eqref{SL1} and \eqref{SL2} stay the same in all eight cases only the boundary conditions \eqref{bc1} and \eqref{bc2} change.
We need a nontrivial solution of \eqref{ode3} of the form
\[ \mu^{\kappa_1/2}(\mu-1)^{\kappa_2/2}\bigl(\mu-k^{-2}\bigr)^{\kappa_3/2} g(\mu),\]
where $g$ is an analytic function on an open region containing the interval $\bigl[0,k^{-2}\bigr]$, that is,
a~nontrivial solution of \eqref{ode3} which is simultaneously a Fuchs--Frobenius solution at the points~$0$,~$1$,~$k^{-2}$
belonging to the indices $\kappa_1/2$, $\kappa_2/2$, $\kappa_3/2$, respectively.
These conditions translate to boundary conditions for \eqref{SL1} and \eqref{SL2}
as follows:
\begin{alignat}{3}
&v'(0)=0 \quad \text{if $\kappa_3=0$}, && v(0)=0 \quad \text{if $\kappa_3=1$}\label{bc3}, &\\
&v'(K')=w'(K)=0 \quad \text{if $\kappa_2=0$},\qquad && v(K')=w(K)=0 \quad \text{if $\kappa_2=1$}\label{bc4}, &\\
&w'(0)=0 \quad \text{if $\kappa_1=0$}, && w(0)=0 \quad \text{if $\kappa_1=1$}.\label{bc5} &
\end{alignat}
For each $\kappa=(\kappa_1,\kappa_2,\kappa_3)\in\{0,1\}^3$, we obtain a regular two-parameter Sturm--Liouville problem for the differential equations \eqref{SL1} and \eqref{SL2} subject to boundary conditions \eqref{bc3}--\eqref{bc5}.
We denote the corresponding eigenvalue functions by $H_{m,\kappa}(\lambda;a,b,c)$ and $h_{n,\kappa}(\lambda;a,b,c)$.
The solution~$\lambda$ of $H_{m,\kappa}(\lambda;a,b,c)=h_{n,\kappa}(\lambda;a,b,c)$
we denote by $\lambda_{m,n,\kappa}(a,b,c)$. This number $\lambda_{m,n,\kappa}(a,b,c)$ is an eigenvalue of the Laplace--Beltrami operator $-\D$, and every eigenvalue of $-\D$ is of this form.
We recall that $a$, $b$, $c$ denote the semi-axes of the ellipsoid $\E$, $\kappa$ is the parity of a corresponding eigenfunction
$u_{m,n,\kappa}=v_{m,\kappa}(s)w_{n,\kappa})$ when considered as a function on $\E$,
$m$ is the number of zeros of $v_{m,\kappa}$ in $(0,K')$, and $n$ is the number of zeros of $w_{n,\kappa}$ in $(0,K)$.

For each parity, we get analogues of Theorems~\ref{t1} and~\ref{t2}.
We state the completeness theorem below. The proof of this theorem is very similar to the proof of Theorem \ref{t2} and is omitted.

\begin{thm}
Let $\kappa\in \{0,1\}^3$.
\begin{itemize}\itemsep=0pt
\item[$(i)$]
The sequence $u_{m,n,\kappa}$, $m,n\in\N_0$,
forms an orthonormal basis for the subspace of $L^2(\E)$ consisting of functions with parity $\kappa$.
When we combine all eigenfunctions $u_{m,n,\kappa}$, ${m,n\in\N_0}$, $\kappa\in\{0,1\}^3$, then we obtain an orthonormal basis for
$L^2(\E)$.

\item[$(ii)$]
Every number $\lambda_{m,n,\kappa}$, $m,n\in\N_0$, is an eigenvalue of the Laplace--Beltrami operator $-\D$
with corresponding eigenfunction $u_{m,n,\kappa}$ of parity $\kappa$.
Every eigenvalue of $-\D$ with an eigenfunction of parity $\kappa$ is equal to one of the $\lambda_{m,n,\kappa}$
\end{itemize}
\end{thm}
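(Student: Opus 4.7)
The plan is to mirror the two-step argument used for parity $(0,0,0)$ on each of the eight parity sectors. The differential equations \eqref{SL1}, \eqref{SL2} and the right-definite determinant $D(s,t)$ do not depend on $\kappa$, so for every $\kappa\in\{0,1\}^3$ the system consisting of \eqref{SL1}, \eqref{SL2}, and the boundary conditions \eqref{bc3}--\eqref{bc5} is again a regular right-definite two-parameter Sturm--Liouville problem. Applying \cite[Theorem 6.8.3]{Vbook} then gives that the normalized products $v_{m,\kappa}(s)w_{n,\kappa}(t)$, $m,n\in\N_0$, form an orthonormal basis of the weighted space $L^2\bigl((0,K')\times(0,K),\,D(s,t)\,{\rm d}s\,{\rm d}t\bigr)$.

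Next, I would use the orthogonal decomposition
\[ L^2(\E)=\bigoplus_{\kappa\in\{0,1\}^3} L^2(\E)_\kappa \]
into the eight parity subspaces and identify each summand with the above weighted $L^2$ space. Restriction of a parity-$\kappa$ function on $\E$ to the open octant $\E_+$ followed by the transcendental ellipsoidal chart is, up to a constant normalization factor, a unitary isomorphism from $L^2(\E)_\kappa$ onto $L^2\bigl((0,K')\times(0,K),\,D(s,t)\,{\rm d}s\,{\rm d}t\bigr)$, since the surface measure pulls back to $D(s,t)\,{\rm d}s\,{\rm d}t$ on $\E_+$. The Cartesian reflections $x\to -x$, $y\to -y$, $z\to -z$ correspond in transcendental coordinates to $t\to -t$, $t\to 2K-t$, and $s\to -s$ respectively, and the boundary conditions \eqref{bc3}--\eqref{bc5} are designed precisely so that the octant-by-octant extension of $v_{m,\kappa}(s)w_{n,\kappa}(t)$ by parity $\kappa$ lies in $L^2(\E)_\kappa$. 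This delivers part~(i), and part~(ii) then follows verbatim from the orthogonality argument in the proof of Theorem~\ref{t2}\,(ii).

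The hard part will be verifying rigorously that the parity extension lands in $L^2(\E)_\kappa$, in particular across the cut $s=K'$ (the curve through the poles where the coordinate chart degenerates). This requires an analogue of Lemma~\ref{l1} for general $\kappa$: one has to show that a solution of \eqref{ode3} of the form $\mu^{\kappa_1/2}(\mu-1)^{\kappa_2/2}\bigl(\mu-k^{-2}\bigr)^{\kappa_3/2}g(\mu)$, with $g$ analytic on a region containing $\bigl[0,k^{-2}\bigr]$, multiplied by the corresponding factor in $\nu$, extends to an analytic function on $\E$ of parity $\kappa$. This also explains why $\kappa_2$ simultaneously governs the conditions on $v(K')$ and $w(K)$ in \eqref{bc4}: both endpoints correspond to the locus $y=0$ on the ellipsoid, and the joint Frobenius index at $\mu=\nu=1$ (i.e., at the four exceptional points $(\pm ak,0,\pm ck')$) is what must be matched. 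Once this analogue of Lemma~\ref{l2} is in place, the argument of Theorem~\ref{t2} carries through mutatis mutandis.
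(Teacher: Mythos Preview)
Your proposal is correct and follows essentially the same approach as the paper, which in fact omits the proof entirely with the remark that it is ``very similar to the proof of Theorem~\ref{t2}''. You have correctly unpacked what that similarity entails: the right-definite two-parameter Sturm--Liouville machinery of \cite[Theorem~6.8.3]{Vbook} applies verbatim for each boundary-condition set \eqref{bc3}--\eqref{bc5}, the eight parity sectors give an orthogonal decomposition of $L^2(\E)$, and the analogue of Lemma~\ref{l2} for general $\kappa$ (solutions of the Frobenius form $\mu^{\kappa_1/2}(\mu-1)^{\kappa_2/2}(\mu-k^{-2})^{\kappa_3/2}g(\mu)$ with $g$ analytic near $[0,k^{-2}]$) is exactly what the paper sets up at the start of Section~\ref{other}. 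Your identification of the reflections with $t\mapsto -t$, $t\mapsto 2K-t$, $s\mapsto -s$ and your observation about why $\kappa_2$ governs both endpoints in \eqref{bc4} are accurate and more explicit than anything the paper states.
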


\section{The Laplace--Beltrami operator on the sphere}\label{sphere}

It is of interest to compare
the two-parameter Sturm--Liouville problem \eqref{SL1} and \eqref{SL2}
subject to boundary conditions \eqref{bc3}--\eqref{bc5} with a well-known simpler problem
that we obtain when treating the Laplace--Beltrami operator $\Delta_\Sp$ on the sphere
\[ \Sp=\{(x,y,z)\colon x^2+y^2+z^2=1\}.\]
It is well known that the operator $-\Delta_\Sp$ has eigenvalues $n(n+1)$, $n\in\N_0$. The eigenspace corresponding to
the eigenvalue $n(n+1)$ has dimension $2n+1$. Eigenfunctions belonging to the eigenvalue $n(n+1)$ are called spherical (surface) harmonics.
Usually, we work with a basis of spherical harmonics that are found by the method of separation variables in spherical coordinates.
We obtain spherical harmonics that are products of associated Legendre polynomials and trigonometric functions.
However, in this work, it is more convenient to
use sphero-conal coordinates. Then we obtain spherical harmonics expressed as products of Lam\'e polynomials known as sphero-conal harmonics.
This is explained below in more detail.

We introduce sphero-conal coordinates $\mu$, $\nu$ on $\Sp$ by \eqref{ell1}--\eqref{ell3} with $a=b=c=1$.
It is important to note that in case of the ellipsoid $\E$ the parameter $k$ is defined by \eqref{k}
whereas $k\in(0,1)$ is arbitrary in case of the sphere.
We then proceed as we did for the Laplace--Beltrami operator on the ellipsoid.
The differential equation \eqref{ode3} becomes the Lam\'e equation~\cite[formula~(29.2.2)]{dlmf} in its algebraic form,
and \eqref{SL2} becomes the Lam\'e equation in its Jacobian form.
We obtain
a~two-parameter Sturm--Liouville problem consisting of differential equations \eqref{SL1} and \eqref{SL2}
with $p(s)=q(t)=1$ subject to boundary conditions \eqref{bc3}--\eqref{bc5}.
We will denote the corresponding eigenvalue functions by $H_{m,\kappa}(\lambda,k)$, $h_{n,\kappa}(\lambda,k)$
and the solution $\lambda$ of $H_{m,\kappa}(\lambda,k)=h_{n,\kappa}(\lambda,k)$ by~$\Lambda_{m,n,\kappa}$ (which is independent of $k$).
These numbers $\Lambda_{m,n,\kappa}$ are eigenvalues of $-\Delta_\Sp$ and they are given by
\begin{equation}\label{Lambda}
 \Lambda_{m,n,\kappa}=(2m+2n+|\kappa|)(2m+2n+|\kappa|+1),\qquad |\kappa|:=\kappa_1+\kappa_2+\kappa_3 .
 \end{equation}
An eigenfunction of $-\Delta_\Sp$ corresponding to the eigenvalue $\ell(\ell+1)$ is given by the sphero-conal harmonic
\begin{equation}\label{harmonic}
 U_{m,n,\kappa}(s,t)=V_{m,n,\kappa}(s)W_{m,n,\kappa}(t),
 \end{equation}
where $2m+2n+|\kappa|=\ell$, $W_{m,n,\kappa}(t)$ is a Lam\'e polynomial and
\[ V_{m,n,\kappa}(s)= W_{m,n,\kappa}(K+{\rm i}K'-{\rm i}s).\]
The Lam\'e polynomial $w(t)=W_{m,n,\kappa}(t)$ is a solution of the Lam\'e equation
\[ w''+\bigl(h_{n,\kappa}(\ell(\ell+1),k)-\ell(\ell+1)k^2 \sn^2(t,k)\bigr) w=0\]
of the form
\[ w(t)=\sn^{\kappa_1}(t,k)\cn^{\kappa_2}(t,k)\dn^{\kappa_3}(t,k) P\bigl(\sn^2(t,k)\bigr),\]
where $P$ is a polynomial of degree $m+n$. Moreover, $V_{m,n,\kappa}(s)$ has $m$ zeros in $(0,K')$ and $W_{m,n,\kappa}(t)$ has $n$ zeros in $(0,K)$.
For more details on Lam\'e polynomials, we refer to Arscott~\mbox{\cite[Chapter~IX]{A}}. Arscott distinguishes eight species of
Lam\'e polynomials which correspond to the eight parities $\kappa\in\{0,1\}^3$ in our notation.

\section{The Pr\"ufer transformation}\label{prufer}

We introduce the Pr\"ufer radius $r(t)>0$ and the Pr\"ufer angle $\theta(t)$ \cite[Section~27.IV]{W} for
system~\eqref{SL2} and~\eqref{bc2} by setting
\[ w(t)=r(t)\sin \theta(t),\qquad \frac{w'(t)}{q(t)}=r(t)\cos \theta(t). \]
Then $\theta(t)$ satisfies the first-order differential equation
\begin{equation}\label{prode}
\frac{{\rm d}\theta}{{\rm d}t}= q(t)\bigl(\cos^2\theta+\bigl(h-\lambda k^2\sn^2(t,k)\bigr)\sin^2\theta\bigr)
\end{equation}
with initial condition
\begin{equation}\label{prbc}
 \theta(0)=\tfrac12(1-\kappa_1)\pi .
 \end{equation}
The pair $(\lambda,h)$ lies on the $n$-th eigencurve if
\begin{equation}\label{prend}
 \theta(K)=\tfrac12(1+\kappa_2)\pi +n\pi .
\end{equation}

\begin{figure}[t]
\centering
\includegraphics[width=8cm]{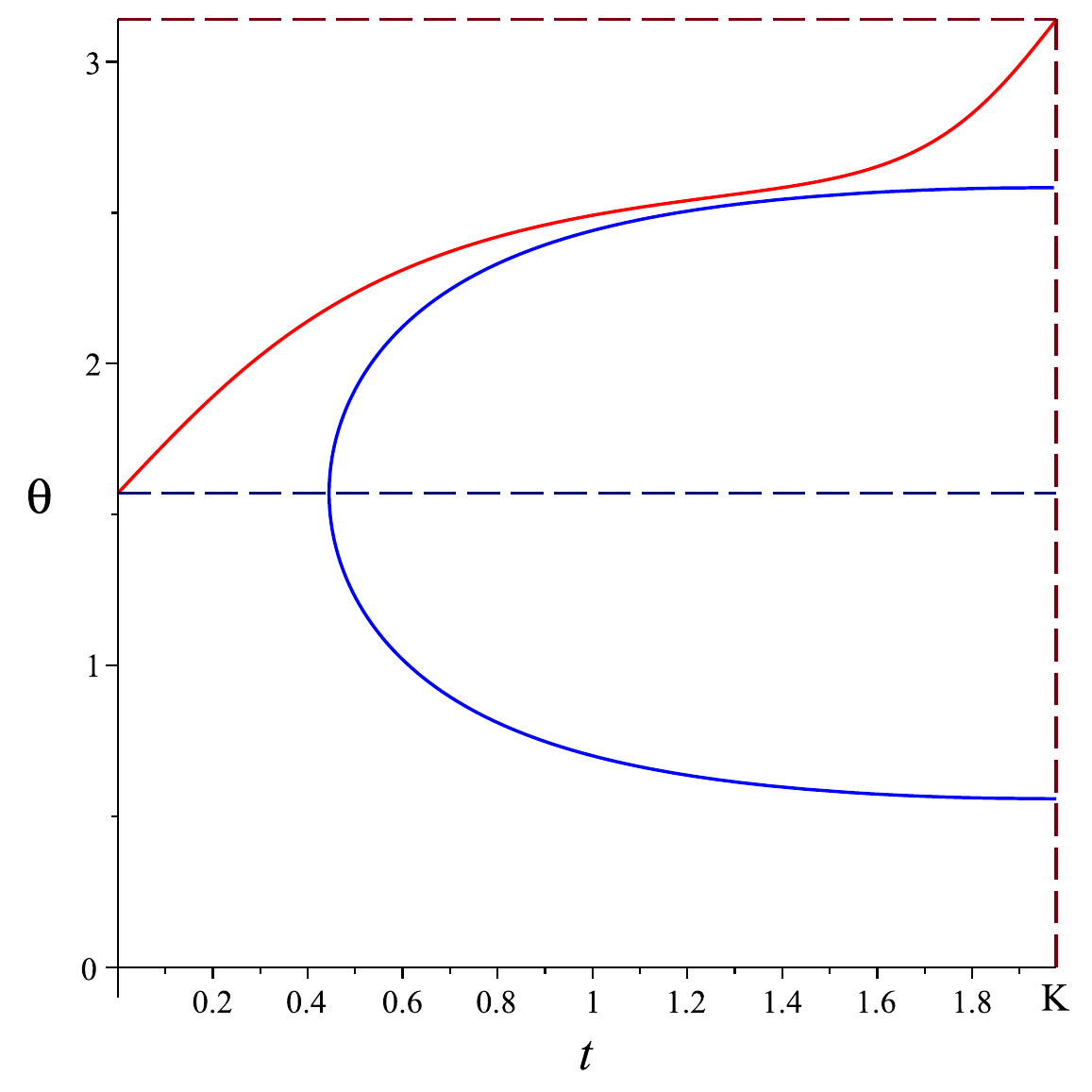}
\caption{The Pr\"ufer angle $\theta(t)$ (in red) and the curve on which the right-hand side of \eqref{prode}
vanishes (in blue).\label{fig2}}
\end{figure}

Consider the example $a=3$, $b=2$, $c=1$, $\kappa_1=0$, $\kappa_2=1$, $\lambda=5$ and $n=0$.
Then $h_{n,\kappa}(\lambda)=0.558216\dots$ and the graph of $\theta(t)$ is shown in Figure~\ref{fig2}.

\begin{Lemma}\label{prueferl1}
If $\kappa_2=1$, $\lambda>0$ and $n\in\N_0$, then every solution $\theta(t)$ of \eqref{prode}, \eqref{prbc}
and \eqref{prend} satisfies $\theta'(t)>0$ for $t\in[0,K]$.
\end{Lemma}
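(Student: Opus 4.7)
The plan is to show $\theta'(t) > 0$ on $[0,K]$ by combining an endpoint check with a computation of $\theta''$ at any interior zero of $\theta'$, which will show that such a zero is necessarily a strict local maximum of $\theta$. At $t = K$, the endpoint condition \eqref{prend} with $\kappa_2 = 1$ gives $\theta(K) = (n+1)\pi$, so $\sin\theta(K) = 0$ and \eqref{prode} yields $\theta'(K) = q(K) > 0$. At $t = 0$, the initial condition \eqref{prbc} gives $\theta(0) \in \{0, \pi/2\}$; in the case $\kappa_1 = 1$ we have $\sin\theta(0) = 0$, hence $\theta'(0) = q(0) > 0$ immediately. In the case $\kappa_1 = 0$ we have $\theta(0) = \pi/2$ and, since $\sn(0,k) = 0$, $\theta'(0) = q(0) h$, so positivity reduces to $h > 0$; this follows from the monotonicity $h_{n,\kappa}'(\lambda) > 0$ (analogous to the formula displayed just after \eqref{estder}) together with $h_{n,\kappa}(0) > 0$, which in turn comes from the fact that after the substitution $d\tau/dt = q(t)$ used in \eqref{ode6} the eigenvalues $h_{n,\kappa}(0)$ are those of $-d^2w/d\tau^2 = h w$ on a bounded interval under Neumann--Dirichlet or Dirichlet--Dirichlet conditions (since $\kappa_2 = 1$), hence strictly positive.

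Next, at any interior point $t_0 \in (0,K)$ where $\theta'(t_0) = 0$, I would differentiate \eqref{prode}: since $\theta'(t_0) = 0$ the $\theta$-derivative terms drop out, leaving only the explicit $t$-derivative contribution, and using $(\sn^2(t,k))' = 2\sn(t,k)\cn(t,k)\dn(t,k)$,
\[
\theta''(t_0) = -2 q(t_0) \lambda k^2 \sn(t_0,k)\cn(t_0,k)\dn(t_0,k)\sin^2\theta(t_0).
\]
Since $\sn$, $\cn$, $\dn$ are strictly positive on $(0,K)$ and $\lambda > 0$, the right-hand side is $\le 0$, with equality only if $\sin\theta(t_0) = 0$. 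But $\sin\theta(t_0) = 0$ would force the right-hand side of \eqref{prode} at $t_0$ to equal $q(t_0)\cos^2\theta(t_0) = q(t_0) > 0$, contradicting $\theta'(t_0) = 0$. Hence $\theta''(t_0) < 0$ strictly.

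The argument then closes by contradiction. Suppose $\theta'$ has a zero in $(0,K)$ and let $t^*$ be the smallest such. By the previous step $\theta''(t^*) < 0$, so $\theta'(t) < 0$ just to the right of $t^*$. Since $\theta'(K) > 0$ and $\theta'$ is continuous, there is a smallest $t^{**} \in (t^*, K]$ at which $\theta'$ returns to $0$; we must have $t^{**} < K$, and $\theta' < 0$ on $(t^*, t^{**})$ forces $\theta''(t^{**}) \ge 0$, contradicting the strict $\theta''(t^{**}) < 0$ from the previous step. Thus no interior zero exists, and combined with $\theta'(0), \theta'(K) > 0$ we conclude $\theta' > 0$ on $[0,K]$. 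The main obstacle I anticipate is the subcase $\kappa_1 = 0$ at $t = 0$: positivity of $\theta'(0)$ is not a local consequence of the ODE but requires the positivity of the eigenvalue $h$, which must be extracted from the Sturm--Liouville framework of Section \ref{twopara} rather than from \eqref{prode} alone.
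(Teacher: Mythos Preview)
Your proof is correct and takes a genuinely different route from the paper's. The paper argues geometrically: it introduces the region $R\subset[0,K]\times\R$ on which the right-hand side of \eqref{prode} is negative, observes that $R$ lies to the right of a curve $t=G(\theta)$, and shows that once the graph of $\theta$ touches $\bar R$ it can never leave again; since $\kappa_2=1$ forces $\theta(K)=(n+1)\pi$ and the point $(K,(n+1)\pi)$ lies outside $\bar R$, the graph never meets $\bar R$ at all. You replace this trapping argument by a local computation: at every interior zero of $\theta'$ one has $\theta''<0$, so $\theta'$ can only cross zero \emph{downwards}, which is incompatible with $\theta'>0$ at both endpoints. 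One small point worth making explicit in your write-up: when you differentiate \eqref{prode} at a zero $t_0$ of $\theta'$, the contribution $q'(t_0)\bigl[\cos^2\theta+(h-\lambda k^2\sn^2 t_0)\sin^2\theta\bigr]$ from the prefactor $q$ also vanishes, because the bracket equals $\theta'(t_0)/q(t_0)=0$; this is why only the $(\sn^2)'$ term survives in your formula for $\theta''(t_0)$. Your argument is more elementary and immediately yields the observation recorded after the lemma for $\kappa_2=0$ (at most one interior zero of $\theta'$, since every zero is a strict downward crossing), whereas the paper's region picture makes the geometry behind Figure~\ref{fig2} transparent. Finally, your identification of the one nontrivial external input---namely $h>0$ when $\kappa_2=1$, obtained from $h_{n,\kappa}(0)>0$ together with $h_{n,\kappa}'(\lambda)>0$---is exactly right, and supplies the justification that the paper's ``We know that $h>0$'' leaves implicit.
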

\begin{proof}
We know that $h>0$. If $h-\lambda k^2>0$ then $h-\lambda k^2\sn^2(t,k)>0$ for all $t\in[0,K]$ and the statement of the
lemma follows immediately from \eqref{prode}.
Now suppose that $h-\lambda k^2\le 0$. Then the region
\[ R:=\bigl\{(t,\theta)\in[0,K]\times \R\colon \cos^2\theta+\bigl(h-\lambda k^2\sn^2(t,k)\bigr)\sin^2\theta<0\bigr\}\]
is given by
\[ R=\{(t,\theta)\colon G(\theta)< t\le K\}, \]
where
\[ G(\theta)={\rm arcsn}\biggl(\frac{h+\cot^2\theta}{\lambda k^2}\biggr)^{1/2} .\]
This function is only defined for $\theta$ with $h+\cot^2\theta\le \lambda k^2$.
In Figure \ref{fig2}, $R$~is the region to the right of the blue curve.
Our goal is to show that the graph of $\theta(t)$ cannot enter the region $R$.

If the point $(t_1,\theta(t_1))$ lies on the boundary of $R$, then $\theta'(t_1)=0$ and so $(t,\theta(t))$ is outside
the closure $\bar R$ of $R$
for $t<t_1$ close to $t_1$. Therefore, once the graph of $\theta(t)$ enters $\bar R$ it must stay in~$\bar R$.
Since all points $(K,\ell \pi)$, $\ell\in\Z$, lie outside of $\bar R$, and $\kappa_2=1$,
the graph of $\theta(t)$ stays outside of $\bar R$ and so $\theta'(t)>0$ for all
$t\in[0,K]$.
\end{proof}

If $\kappa_2=0$, then Lemma \ref{prueferl1} is not always true. For example, if $\kappa_1=\kappa_2=0$
and $n=0$, then $\theta(0)=\theta(K)=\tfrac12\pi$, so $\theta'(t)$ cannot be positive for all $t\in[0,K]$.
If $\theta'(t)$ is not positive throughout the interval $[0,K]$ then arguing as in the proof of Lemma~\ref{prueferl1}
one can show that there is $t_0\in(0,K]$ such that $\theta'(t_0)=0$, $\theta'(t)>0$ for $t\in[0,t_0)$ and
$\theta'(t)<0$ for $t\in(t_0,K]$.

\begin{thm}
For all $m,n\in\N_0$ and all parities $\kappa=(\kappa_1,\kappa_2,\kappa_3)$ with $\kappa_2=1$, we have
\begin{equation}\label{ineq1}
 a^{-2}\Lambda_{m,n,\kappa}<\lambda_{m,n,\kappa}(a,b,c)< c^{-2}\Lambda_{m,n,\kappa},
 \end{equation}
where $\Lambda_{m,n,\kappa}$ is given by \eqref{Lambda}.
If $\kappa_2=0$, then
\begin{equation}\label{ineq2}
 \lambda_{m,n,\kappa}(a,b,c)<c^{-2}\Lambda_{m,n,\hat\kappa},
\end{equation}
where $\hat\kappa=(\kappa_1,1,\kappa_2)$.
If $\kappa_2=0$ and $m,n\ge 1$, then
\begin{equation}\label{ineq3}
 a^{-2}\Lambda_{m-1,n-1,\hat\kappa}<\lambda_{m,n,\kappa}(a,b,c) .
\end{equation}
\end{thm}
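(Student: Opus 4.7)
The plan is to sandwich the ellipsoidal eigencurves $H_{m,\kappa}(\lambda;a,b,c)$ and $h_{n,\kappa}(\lambda;a,b,c)$ between the corresponding ``sphere-of-radius-$r$'' eigencurves obtained by setting $p(s)\equiv q(t)\equiv r$ in \eqref{SL1}, \eqref{SL2}, for $r=a$ and $r=c$. The scaling $(\lambda,h)\mapsto(r^2\lambda,r^2h)$ reduces the sphere-of-radius-$r$ problem to the unit-sphere problem of Section \ref{sphere}, so these sphere eigencurves intersect at $\lambda=r^{-2}\Lambda_{m,n,\kappa}$. Since $c\le p(s)\le b$ on $[0,K']$ and $b\le q(t)\le a$ on $[0,K]$, the ellipsoidal coefficients are pointwise between those of the two spheres, and my strategy is to transfer this pointwise ordering to the eigencurves, then conclude about $\lambda_{m,n,\kappa}(a,b,c)$ using the fact that $H_{m,\kappa}-h_{n,\kappa}$ is strictly monotone in $\lambda$ by \eqref{estder}.

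\emph{Case $\kappa_2=1$.} Lemma \ref{prueferl1} yields positivity of $\cos^2\theta+(h-\lambda k^2\sn^2(t,k))\sin^2\theta$ along the eigenvalue trajectory of \eqref{prode}; the analogous lemma for the Pr\"ufer angle of the $v$-equation \eqref{SL1}, proved by the same forward-invariance argument applied at the endpoint $(K',(m+1)\pi)$, gives positivity of $\cos^2\phi+(-H+\lambda\dn^2(s,k'))\sin^2\phi$ along the $v$-trajectory. Along the linear homotopy $q_\epsilon(t):=(1-\epsilon)c+\epsilon q(t)$ from the constant $c$ to the ellipsoidal $q$, the Hellmann--Feynman identity gives
\[
\frac{dh_{n,\kappa}(\epsilon)}{d\epsilon}=-\frac{\int_0^K (q(t)-c)\,\rho_\epsilon(t)^2\bigl[\cos^2\theta_\epsilon+(h_{n,\kappa}(\epsilon)-\lambda k^2\sn^2(t,k))\sin^2\theta_\epsilon\bigr]\,{\rm d}t}{\int_0^K q_\epsilon(t)\,w_{n,\kappa,\epsilon}(t)^2\,{\rm d}t},
\]
with $(\rho_\epsilon,\theta_\epsilon)$ the Pr\"ufer variables for weight $q_\epsilon$. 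Since Lemma \ref{prueferl1} applies at each $\epsilon\in[0,1]$, the bracket is strictly positive, and the factor $q(t)-c\ge 0$ is strictly positive on an open set (as $b>c$), so the derivative is strictly negative. Integrating over $\epsilon$ yields $h_{n,\kappa}(\lambda;a,b,c)<c^{-2}h_{n,\kappa}(c^2\lambda,k)$ for all $\lambda>0$; the analogous argument for the $v$-equation (where $-H$ is the Sturm--Liouville spectral parameter, so the sign reverses) gives $H_{m,\kappa}(\lambda;a,b,c)>c^{-2}H_{m,\kappa}(c^2\lambda,k)$. Running the homotopy from $a$ instead reverses both inequalities.

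At $\lambda^*:=c^{-2}\Lambda_{m,n,\kappa}$ the two sphere-$c$ eigencurves meet, so the previous step gives $H_{m,\kappa}(\lambda^*;a,b,c)>h_{n,\kappa}(\lambda^*;a,b,c)$; since $H_{m,\kappa}-h_{n,\kappa}$ has strictly positive derivative in $\lambda$ by \eqref{estder} and vanishes at $\lambda_{m,n,\kappa}(a,b,c)$, it follows that $\lambda_{m,n,\kappa}(a,b,c)<\lambda^*$. The symmetric argument at $\lambda^{**}=a^{-2}\Lambda_{m,n,\kappa}$ gives the lower bound in \eqref{ineq1}. For $\kappa_2=0$ the Pr\"ufer lemma fails; instead I invoke the standard Sturm--Liouville interlacing obtained by switching the boundary condition at $s=K'$ and $t=K$ from Neumann to Dirichlet:
\[
h_{n-1,\hat\kappa}(\lambda)<h_{n,\kappa}(\lambda)<h_{n,\hat\kappa}(\lambda),\qquad H_{m,\hat\kappa}(\lambda)<H_{m,\kappa}(\lambda)<H_{m-1,\hat\kappa}(\lambda)
\]
for all $\lambda\in\R$ and $m,n\ge 1$. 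Chaining this interlacing with the case $\kappa_2=1$ applied to $\hat\kappa$: at $\lambda^*=c^{-2}\Lambda_{m,n,\hat\kappa}$ we obtain $H_{m,\kappa}(\lambda^*)>H_{m,\hat\kappa}(\lambda^*)>h_{n,\hat\kappa}(\lambda^*)>h_{n,\kappa}(\lambda^*)$, which yields \eqref{ineq2}; and at $\lambda^{**}=a^{-2}\Lambda_{m-1,n-1,\hat\kappa}$ (requiring $m,n\ge 1$) the analogous reversed chain yields \eqref{ineq3}.

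The main obstacle is establishing the $v$-equation analog of Lemma \ref{prueferl1} together with the persistence of the Pr\"ufer bracket positivity along the entire homotopies $q_\epsilon$ and $p_\epsilon$ (not merely at their endpoints), which is what makes the Hellmann--Feynman derivative strictly signed and hence yields strict monotonicity of the eigencurves in $p$ and $q$; once this monotonicity is in place, the remainder of the argument is a routine intermediate-value argument on the strictly monotone difference $H_{m,\kappa}-h_{n,\kappa}$ combined with classical Sturm interlacing.
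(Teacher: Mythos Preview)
Your argument is correct, but it takes a heavier route than the paper's. The paper first invokes the scaling identity $\lambda_{m,n,\kappa}(ra,rb,rc)=r^{-2}\lambda_{m,n,\kappa}(a,b,c)$ to normalize $c=1$ (resp.\ $a=1$), so that $q(t)>1$ and $p(s)>1$ pointwise. It then compares the ellipsoidal Pr\"ufer angle $\theta$ directly with the unit-sphere Pr\"ufer angle $\psi$ (where $q\equiv 1$) via a single application of the ODE comparison theorem: writing $f(t,\theta)=\cos^2\theta+(h-\lambda k^2\sn^2 t)\sin^2\theta$, Lemma~\ref{prueferl1} gives $\theta'-f(t,\theta)=\theta'(1-q^{-1})>0=\psi'-f(t,\psi)$, hence $\theta(K)>\psi(K)$ and thus $h_{n,\kappa}(\lambda;a,b,c)<h_{n,\kappa}(\lambda,k)$. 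No homotopy, no Hellmann--Feynman, and Lemma~\ref{prueferl1} is invoked only once, for the actual ellipsoidal weight. Your approach instead deforms $q_\epsilon$ continuously from the constant $c$ to $q$ and signs the $\epsilon$-derivative of the eigenvalue at every point of the homotopy; this requires the (easy but unstated) observation that the proof of Lemma~\ref{prueferl1} uses nothing about $q$ beyond positivity, so it applies to each $q_\epsilon$. Both arguments need the $v$-equation analogue of Lemma~\ref{prueferl1}, which the paper handles ``in a similar way'' (via the symmetry that swaps $a\leftrightarrow c$, $k\leftrightarrow k'$, $h\leftrightarrow\lambda-h$). For the $\kappa_2=0$ case your interlacing reduction to $\hat\kappa$ is exactly what the paper does. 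In short: your perturbative route works and would even yield a differential inequality for the eigencurves under variation of the weight, but for the bare inequality the paper's one-shot Pr\"ufer comparison after rescaling is cleaner.
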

\begin{proof}
Since $\lambda_{m,n,\kappa}(ra,rb,rc)=r^{-2} \lambda_{m,n,\kappa}(a,b,c)$, it is sufficient to assume that $c=1$
in the proof of the second inequality in \eqref{ineq1}.
Then the function $q(t)$ defined by \eqref{q} satisfies $q(t)>1$.
Let $\psi(t)$ be the solution of the differential equation
\begin{equation*}
\frac{{\rm d}\psi}{{\rm d}t}= f(t,\psi):=\cos^2\psi+\bigl(h-\lambda k^2\sn^2(t,k)\bigr)\sin^2\psi
\end{equation*}
determined by the initial condition $\psi(0)=\frac12(1-\kappa_1)\pi$.
Then Lemma \ref{prueferl1} gives
\[\theta'(t)-f(t,\theta(t))=\theta'(t)\bigl(1-q(t)^{-1}\bigr)>0=\psi'(t)-f(t,\psi(t)).\]
We also have $\psi(0)=\theta(0)$ and $\psi'(0)<\theta'(0)$. Now the comparison theorem \RS{\cite[?????~II.9.III]{WW}} yields $\psi(t)<\theta(t)$
for $0<t\le K$.
Using this inequality for $t=K$, we obtain from \eqref{prend} that
\[ h_{n,\kappa}(\lambda;a,b,c)< h_{n,\kappa}(\lambda,k) .\]
Since $p(s)>1$ for $0<s\le K'$, we can show in a similar way that
\[ H_{m,\kappa}(\lambda;a,b,c)> H_{m,\kappa}(\lambda,k).\]
Let $\lambda_0=\lambda_{m,n,\kappa}(a,b,c)$. Then
\[ H_{m,\kappa}(\lambda_0,k)<H_{m,\kappa}(\lambda_0;a, b,c)=h_{n,\kappa}(\lambda_0;a,b,c)<h_{n,\kappa}(\lambda_0,k) .\]
Therefore, if we set $g(\lambda)=H_{m,\kappa}(\lambda,k)-h_{n,k}(\lambda,k)$, then $g(\lambda_0)<0$. Now $g(\Lambda_{m,n,\kappa})=0$ and $g(\lambda)$ is an increasing function so
$\lambda_0<\Lambda_{m,n,\kappa}$ completing the proof of the second inequality in \eqref{ineq1}.
The first inequality is proved in a very similar way.

To prove \eqref{ineq2}, we assume $\kappa_2=0$ and $c=1$. Then we have
\[ H_{m,\kappa}(\lambda;a,b,c)>H_{m,\hat\kappa}(\lambda;a,b,c),\qquad h_{n,\kappa}(\lambda;a,b,c)<h_{n,\hat\kappa}(\lambda;a,b,c),
\]
so $\lambda_{m,n,\kappa}(a,b,c)<\lambda_{m,n,\hat\kappa}(a,b,c)$.
Now \eqref{ineq2} follows from \eqref{ineq1} applied to $\hat\kappa$ in place of $\kappa$.

Inequality \eqref{ineq3} is proved in a similar way.
\end{proof}

\section{Approximation by matrix eigenvalue problems}\label{matrix}

The eigenvalue functions $h_n(\lambda)$ for equation \eqref{SL2} can be computed using the Pr\"ufer angle by the
SLEIGN2 code \cite{BEZ}. However, we present another method which approximates the eigenvalues $h_n(\lambda)$ by
eigenvalues of finite matrices.
We transform the differential equation \eqref{SL2} into trigonometric form and then expand
eigenfunctions in Fourier series.
This method was used by Ince \cite{I.FIP} in order to compute eigenvalues of the Lam\'e equation~\eqref{lame1}.

Following Erd\'elyi \cite[formula~(4), p.~56]{EMO3}, we substitute
\[ \tau=\tfrac12\pi-\am(t,k) \]
in \eqref{SL2} where $\am$ denotes the Jacobian amplitude function \cite[Section~22.16\,(i)]{dlmf}.
We multiply~\eqref{SL2} by $q(t)^3$ and use
\[ \frac{{\rm d}\tau}{{\rm d}t}=-\dn(t,k),\qquad \cn(t,k)=\sin \tau, \qquad\sn(t,k)=\cos\tau.\]
Then we obtain
\begin{equation}\label{ode7}
D w +\lambda C w= h B w,
\end{equation}
where $D$ is the differential operator
\begin{equation*}
 Dw:=-\bigl(a^2\sin^2\tau+b^2\cos^2\tau\bigr)\bigl(1-k^2\cos^2\tau\bigr) \frac{{\rm d}^2w}{{\rm d}\tau^2}-c^2k^2\cos\tau\sin\tau \frac{{\rm d}w}{{\rm d}\tau}
\end{equation*}
and $C$, $B$ are multiplication operators
\begin{gather*}
 Cw := k^2\cos^2\tau \bigl(a^2\sin^2\tau+b^2\cos^2\tau\bigr)^2 w,\qquad
 Bw := \bigl(a^2\sin^2\tau+b^2\cos^2\tau\bigr)^2 w.
 \end{gather*}
The boundary conditions corresponding to \eqref{bc2} are
\begin{equation*}
 w'(0)=w'\bigl(\tfrac12\pi\bigr)=0,\qquad '=\frac{{\rm d}}{{\rm d}\tau} .
\end{equation*}
We compute the matrix representation of $D$ in the trigonometric basis $\cos(2n\tau)$, $n\in\N_0$.
Using
\begin{gather*}
\bigl(a^2\sin^2\tau+b^2\cos^2\tau\bigr)\bigl(1-k^2\cos^2\tau\bigr) =
\tfrac12\bigl(a^2+b^2\bigr)-\tfrac18k^2\bigl(a^2+3b^2\bigr) \\ \hphantom{\bigl(a^2\sin^2\tau+b^2\cos^2\tau\bigr)\bigl(1-k^2\cos^2\tau\bigr) =}{}\!
+\tfrac12\bigl(b^2-a^2-b^2k^2\bigr)\cos(2\tau)+\tfrac18k^2\bigl(a^2-b^2\bigr)\cos(4\tau),
\end{gather*}
we obtain
\begin{gather}
 D\cos(2n\tau)= n^2\bigl(2\bigl(a^2+b^2\bigr)-\tfrac12k^2\bigl(a^2+3b^2\bigr)\bigr)\cos(2n\tau) \nonumber\\ \hphantom{D\cos(2n\tau)=}{}
 +n^2\bigl(b^2-a^2-b^2k^2\bigr)(\cos((2n-2)\tau)+\cos((2n+2)\tau))\nonumber\\ \hphantom{D\cos(2n\tau)=}{}
 + \tfrac14n^2k^2\bigl(a^2-b^2\bigr)(\cos((2n-4)\tau)+\cos((2n+4)\tau))\nonumber \\ \hphantom{D\cos(2n\tau)=}{}
 +\tfrac12 nc^2k^2(\cos((2n-2)\tau)-\cos((2n+2)\tau)).\label{matrixD}
\end{gather}
Similarly,
\begin{gather*}
\bigl(a^2\sin^2\tau+b^2\cos^2\tau\bigr)^2\cos^2\tau =\tfrac1{16}\bigl(a^4+2a^2b^2+5b^4\bigr)+\tfrac1{32}\bigl(15b^4+2a^2b^2-a^4\bigr)\cos(2\tau)
\\ \hphantom{\bigl(a^2\sin^2\tau+b^2\cos^2\tau\bigr)^2\cos^2\tau =}{}
+\tfrac1{16}\bigl(b^2-a^2\bigr)\bigl(a^2+3b^2\bigr)\cos(4\tau) +\tfrac1{32}\bigl(a^2-b^2\bigr)^2\cos(6\tau)
\end{gather*}
gives
\begin{gather}
 C\cos(2n\tau)=
 \tfrac1{16} k^2\bigl(a^4+2a^2b^2+5b^4\bigr)\cos(2n\tau)\nonumber\\ \hphantom{ C\cos(2n\tau)=}{}
 +\tfrac1{64} k^2\bigl(15b^4+2a^2b^2-a^4\bigr)(\cos((2n-2)\tau)+\cos((2n+2)\tau))\nonumber\\ \hphantom{ C\cos(2n\tau)=}{}
 +\tfrac1{32} k^2 \bigl(b^2-a^2\bigr)\bigl(a^2+3b^2\bigr)(\cos((2n-4)\tau)+\cos((2n+4)\tau))\nonumber\\ \hphantom{ C\cos(2n\tau)=}{}
 +\tfrac1{64} k^2 \bigl(a^2-b^2\bigr)^2 (\cos((2n-6)\tau)+\cos((2n+6)\tau)).\label{matrixC}
\end{gather}
Finally,
\begin{gather*}
\bigl(a^2\sin^2\tau+b^2\cos^2\tau\bigr)^2 =\tfrac18\bigl(3a^4+2a^2b^2+3b^4\bigr)+\tfrac12\bigl(b^4-a^4\bigr)\cos(2\tau)+ \RS{3a^4\bigr)}\\
\hphantom{\bigl(a^2\sin^2\tau+b^2\cos^2\tau\bigr)^2 =}{}
+\tfrac18\bigl(a^2-b^2\bigr)^2\cos(4\tau)
\end{gather*}
gives
\begin{gather}
B\cos(2n\tau)=
 \tfrac18\bigl(3a^4+2a^2b^2+3b^4\bigr)\cos(2n\tau)\nonumber\\ \hphantom{B\cos(2n\tau)=}{}
 +\tfrac14\bigl(b^4-a^4\bigr) (\cos((2n-2)\tau)+\cos((2n+2)\tau))\nonumber\\ \hphantom{B\cos(2n\tau)=}{}
 +\tfrac1{16}\bigl(a^2-b^2\bigr)^2 (\cos((2n-4)\tau)+\cos((2n+4)\tau)).\label{matrixB}
\end{gather}

For $N\in\N$, we define $N+1$ by $N+1$ matrices $D_N$, $C_N$, $B_N$.
The entry in the $(j+1)$-th row and $(n+1)$-th column of $D_N$, $C_N$, $B_N$ is the coefficient of $\cos(2j\tau)$ on the right-hand side of~\eqref{matrixD}--\eqref{matrixB}, respectively, where $j,n=0,\dots,N$.
For instance, we have
\[ B_4=\begin{pmatrix} b_0 & b_1 & b_2 & 0 &0\\ 2b_1 & b_0+b_2 & b_1 & b_2 &0 \\
2b_2 & b_1 & b_0 & b_1&b_2 \\
0 & b_2 & b_1 & b_0 &b_1\\
0 & 0 & b_2 & b_1 & b_0
\end{pmatrix},\]
where
\[ b_0:=\tfrac18\bigl(3a^4+2a^2b^2+3b^4\bigr),\qquad b_1:=\tfrac14\bigl(b^4-a^4\bigr),\qquad b_2:=\tfrac1{16}\bigl(a^2-b^2\bigr)^2 .\]
We consider the generalized matrix eigenvalue problem
\begin{equation*}
 D_N u+\lambda C_Nu= h B_N u.
 \end{equation*}
We denote its eigenvalues $h$ by $\hat h_{n,N}(\lambda;a,b,c)$, $n=0,1,\dots,N$, arranged in increasing order of their
real parts. We consider $\hat h_{n,N}(\lambda;a,b,c)$ as an approximation to $h_{n,\kappa}(\lambda;a,b,c)$
with parity $\kappa=(0,0,0)$. This is confirmed by the following result.

\begin{thm}\label{convergence}
For every $a>b>c>0$, $m,n\in\N_0$ and $\lambda>0$, we have
\[ \lim_{N\to\infty} \hat h_{n,N}(\lambda;a,b,c)=h_{n,\kappa}(\lambda;a,b,c),\qquad \text{where $\kappa=(0,0,0)$}.
\]
\end{thm}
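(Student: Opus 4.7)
The plan is to recognize the generalized matrix eigenvalue problem $D_N u + \lambda C_N u = h B_N u$ as a Fourier--Galerkin discretization of the operator pencil $(D + \lambda C, B)$ associated with equation \eqref{ode7}, and then to invoke the classical min-max (Rayleigh--Ritz) convergence theorem for self-adjoint problems with compact resolvent.

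First I would set up the correct self-adjoint framework. The change of variable $\tau = \tfrac{1}{2}\pi - \operatorname{am}(t,k)$ is a real-analytic diffeomorphism of $[0,K]$ onto $[0,\pi/2]$ with $d\tau/dt = -\dn(t,k)$, and multiplication of \eqref{SL2} by $q(t)^3$ followed by this substitution yields \eqref{ode7}. A direct computation (using $(q^{-1}\dn)_\tau = c^2k^2\cos\tau\sin\tau/(q^3\dn)$, which follows from the chain rule together with $k^2=(a^2-b^2)/(a^2-c^2)$) shows that the first-order coefficient $-c^2k^2\cos\tau\sin\tau$ in $D$ is precisely what absorbs into a weight, so that the pencil $(D+\lambda C,B)$ with the Neumann conditions $w'(0)=w'(\pi/2)=0$ is self-adjoint in $\mathcal H := L^2((0,\pi/2), (q/\dn)\,d\tau)$ with compact resolvent. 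Its spectrum is therefore the real, simple, increasing sequence $\{h_{n,\kappa}(\lambda;a,b,c)\}_{n\in\mathbb{N}_0}$ of the original Sturm--Liouville problem \eqref{SL2}, \eqref{bc2} (with $\kappa=(0,0,0)$).

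Next I would identify the matrix pencil as a Galerkin discretization. The cosine family $\{e_j:=\cos(2j\tau)\}_{j\in\mathbb N_0}$ satisfies the Neumann conditions and is a total set in $L^2(0,\pi/2)$ under any smooth positive weight. Denoting by $G_\rho$ the Gram matrix $((e_j,e_n)_\rho)_{j,n=0}^N$ in the weighted space $L^2(\rho\,d\tau)$, the expansions \eqref{matrixD}--\eqref{matrixB} give $\int\rho\,(De_n)\,e_j\,d\tau = (G_\rho D_N)_{jn}$ and analogously for $C$ and $B$. Hence the weighted Galerkin pencil reads $G_\rho(D_N+\lambda C_N)u = h\,G_\rho B_N u$, which, since $G_\rho$ is invertible, has exactly the same eigenvalues as the paper's pencil $D_N u + \lambda C_N u = h B_N u$. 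Choosing the symmetrizing weight $\rho = (q^3\dn)^{-1}$ (in which $D+\lambda C$ is self-adjoint and $B$ is multiplication by the positive function $q^4$) converts the Galerkin pencil into a symmetric positive-definite generalized eigenvalue problem; in particular the eigenvalues $\hat h_{n,N}(\lambda;a,b,c)$ are real for every $N$.

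Finally I would conclude by the standard min-max convergence. The subspaces $V_N := \operatorname{span}\{e_0,\ldots,e_N\}$ are nested and their union is dense in the form domain, since the eigenfunctions of the regular Sturm--Liouville problem \eqref{SL2}, \eqref{bc2} are real-analytic on the closed interval $[0,\pi/2]$ and hence admit uniformly (indeed, exponentially) convergent Fourier cosine series. The Rayleigh--Ritz / min-max principle for a self-adjoint pencil with compact resolvent then yields monotone convergence from above of the $(n{+}1)$-st Galerkin eigenvalue to $h_{n,\kappa}(\lambda;a,b,c)$ as $N\to\infty$, which is exactly the statement of Theorem \ref{convergence}. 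The main obstacle I expect is not the abstract convergence argument, which is classical, but the bookkeeping needed to reconcile the apparently asymmetric matrices $D_N,C_N,B_N$ with a symmetric positive-definite generalized eigenvalue problem --- i.e., exhibiting the symmetrizing weight $(q^3\dn)^{-1}$ and verifying that $D$ is formally self-adjoint with respect to it via the chain-rule identity mentioned above.
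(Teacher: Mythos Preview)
Your overall strategy---find a weight in which the pencil is symmetric, then invoke Rayleigh--Ritz/min-max---is attractive, but there is a genuine gap at the point where you identify the paper's matrix pencil with a weighted Galerkin projection. The identity $\int\rho\,(De_n)\,e_j\,d\tau = (G_\rho D_N)_{jn}$ is false for $n$ close to $N$: since $De_n$, $Ce_n$, $Be_n$ have Fourier components $e_k$ with $k>N$ (they are banded, not invariant on $V_N$), and since the cosines are \emph{not} orthogonal in $L^2(\rho\,d\tau)$ for your weight $\rho=(q^3\dn)^{-1}$, those overflow components contribute to the left-hand side but are missing from the finite matrix product on the right. Equivalently, the matrices $D_N,C_N,B_N$ as defined in the paper are (up to a diagonal row scaling) the Galerkin matrices in the \emph{unweighted} space $L^2(d\tau)$, and in that space the operator $D$ is not formally self-adjoint: one checks directly from \eqref{matrixD} that $(D_N)_{n+1,n}\ne(D_N)_{n,n+1}$ for $n\ge1$. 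So you do not in fact have a symmetric positive-definite pencil, the eigenvalues need not be real a priori, and the min-max argument does not apply to the matrices actually under consideration.

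This is precisely why the paper does \emph{not} argue variationally. Instead it writes $D+\lambda C=A+S$ with $A=-d^2/d\tau^2$ (self-adjoint with Neumann conditions, and the $e_n$ are its eigenfunctions) and $S$ a lower-order perturbation, then proves the relative-boundedness estimates \eqref{cond1}--\eqref{cond2} with $\beta<1$ (this uses $\|f_1\|_\infty<1$, which is where the normalization $a=1$ enters). Convergence then follows from a general operator-theoretic result \cite[Theorems~B.1, B.2]{V} designed exactly for such non-self-adjoint truncations in an eigenbasis of $A$. Your approach would go through cleanly if the discretization were the $\rho$-weighted Galerkin projection, but that is a different matrix problem from the one the theorem is about; to repair your argument you would have to quantify the discrepancy in the last few rows/columns and show it is asymptotically negligible, which is essentially a perturbation estimate of the same flavor the paper uses anyway.
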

\begin{proof}
We assume that $a=1$ without loss of generality. We apply a general theorem \cite[Theorem B.1]{V}
from operator theory in Hilbert space.
Our Hilbert space is $H=L^2\bigl(0,\tfrac12\pi\bigr)$. The operator on the left-hand side of \eqref{ode7}
is decomposed as $D+\lambda C=A+S$, $S=S_1+S_2+S_3$,
where
\begin{alignat*}{3}
& A u:=-u'',&&& \\
&S_1u:=f_1(\tau)u'',\qquad &&f_1(\tau):=1-\bigl(\sin^2\tau+b^2\cos^2\tau\bigr)\bigl(1-k^2\cos^2\tau\bigr),&\\
&S_2u:=f_2(t)u',\qquad&& f_2(\tau):=-c^2k^2\cos\tau\sin\tau,&\\
&S_3u:=f_3(\tau) u,\qquad&& f_3(\tau):=\lambda k^2\cos^2 \tau\bigl(\sin^2\tau+b^2\cos^2\tau\bigr)^2.&
\end{alignat*}
The domain of $A$ consists of all continuously differentiable functions $u\colon \big[0,\frac12\pi\big]\to\C$
such that~$u'$ is absolutely continuous, $u''\in H$, and $u'(0)=u'\bigl(\frac12\pi\bigr)=0$.
The operator $A$ is self-adjoint, positive semi-definite with compact resolvent. Its eigenfunctions are
\[ e_0(\tau)=\sqrt{\frac2\pi},\quad e_\ell(\tau)=\frac{2}{\sqrt\pi}\cos(2\ell\tau),\qquad \ell\in\N.\]
The system $\{e_\ell\}_{\ell\in\N_0}$ forms an orthonormal basis for $H$.
The domains of $S_1$, $S_2$ are taken to be~$D(A)$ and the domain of $S_3$ is $H$.

In \cite[Theorem B.1]{V}, it is assumed that there are nonnegative constants $\alpha$, $\beta$ with $\beta<1$ such~that
\begin{gather}\label{cond1}
\|Su\|^2\le \alpha^2\|u\|^2+\beta^2\|Au\|^2\qquad\text{for all} \quad u\in D(A),\\
|\langle P_nSu,u\rangle|\le \alpha \langle u,u\rangle+\beta\langle Au,u\rangle\qquad \text{for all} \quad u\in D(A),\label{cond2}
\end{gather}
where $P_n$ is the orthogonal projection onto the linear span of $\{e_0,e_1,\dots,e_n\}$,
and $\|\cdot\|$, $\langle \cdot,\cdot\rangle$ denote the norm and inner product of $H$.
The existence of these constants is shown in the following lemma.

The statement of Theorem \ref{convergence} now follows from \cite[Theorems B.1 and B.2]{V}.
\end{proof}

\begin{Lemma}
There are nonnegative constants $\alpha$, $\beta$ with $\beta<1$ such that
\eqref{cond1} and \eqref{cond2} hold.
\end{Lemma}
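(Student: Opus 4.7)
The plan hinges on one algebraic fact: the coefficient of the highest-order perturbation, $f_1$, is uniformly bounded strictly below $1$. Setting $a=1$ (as in the proof of Theorem \ref{convergence}), $s=\sin^2\tau$, and $g(s):=(b^2+(1-b^2)s)(k'^2+k^2s)$, one checks that $g$ is increasing on $[0,1]$ from $g(0)=b^2k'^2>0$ to $g(1)=1$; hence $f_1=1-g(\sin^2\tau)\in[0,\,1-b^2k'^2]$, so
\[
\|f_1\|_\infty=1-b^2k'^2<1,
\]
where strictness comes from $b\in(0,1)$ and $k'\in(0,1)$, that is, from the non-degeneracy $a>b>c>0$. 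I will also need the Neumann interpolation $\|u'\|^2=-\langle u'',u\rangle\le\|u''\|\,\|u\|$, obtained by a single integration by parts using $u'(0)=u'(\pi/2)=0$, together with the identity $\langle Au,u\rangle=\|u'\|^2$.

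For \eqref{cond1}, apply the triangle inequality $\|Su\|\le\|f_1\|_\infty\|u''\|+\|f_2\|_\infty\|u'\|+\|f_3\|_\infty\|u\|$, then split $\|u'\|\le\sqrt{\delta/2}\,\|u''\|+(2\delta)^{-1/2}\|u\|$ via AM--GM applied to $\|u'\|^2\le\|u\|\|u''\|$. Squaring using $(x+y)^2\le(1+\eta)x^2+(1+\eta^{-1})y^2$ delivers \eqref{cond1} with $\beta^2=(1+\eta)\bigl(\|f_1\|_\infty+\|f_2\|_\infty\sqrt{\delta/2}\bigr)^2$; picking $\delta,\eta>0$ small enough makes $\beta<1$ precisely because $\|f_1\|_\infty<1$.

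For \eqref{cond2}, note that $Ae_\ell=(2\ell)^2 e_\ell$, so $P_n$ commutes with $A$ and $\|(P_nu)'\|^2=\sum_{\ell=0}^n(2\ell)^2|u_\ell|^2\le\|u'\|^2$. By self-adjointness of $P_n$, $\langle P_nSu,u\rangle=\langle Su,P_nu\rangle$. The contributions from $S_2$ and $S_3$ are bounded by $\|f_2\|_\infty\|u'\|\,\|u\|$ and $\|f_3\|_\infty\|u\|^2$ respectively. For $S_1$, integrate by parts once (the boundary term vanishes because $u'$ does at $0$ and $\pi/2$):
\[
\langle S_1u,P_nu\rangle=-\int_0^{\pi/2}f_1'\,\overline{P_nu}\,u'\,d\tau-\int_0^{\pi/2}f_1\,\overline{(P_nu)'}\,u'\,d\tau.
\]
The second summand is at most $\|f_1\|_\infty\|(P_nu)'\|\,\|u'\|\le\|f_1\|_\infty\|u'\|^2$, and the first is at most $\|f_1'\|_\infty\|u\|\,\|u'\|\le\varepsilon\|u'\|^2+C_\varepsilon\|u\|^2$. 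Applying Young's inequality to the remaining $\|u\|\,\|u'\|$ cross-terms yields a total of the form $(\|f_1\|_\infty+O(\varepsilon))\langle Au,u\rangle+\alpha\langle u,u\rangle$, and $\|f_1\|_\infty<1$ once more ensures $\beta<1$.

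The only real obstacle is the quantitative inequality $\|f_1\|_\infty<1$, which expresses that the principal symbol of the model operator $A=-\partial_\tau^2$ is never fully absorbed by the ellipsoidal coefficient; the other delicate point, the bound $\|(P_nu)'\|\le\|u'\|$, rests on $P_n$ being the spectral projector of $A$ itself. Once these are in place, the rest is a routine combination of triangle, Young and AM--GM inequalities. Finally, taking the maximum of the $\alpha$'s and $\beta$'s produced by the two estimates yields common constants, and since both $\beta$'s are $<1$ so is their maximum.
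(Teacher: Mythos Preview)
Your proof is correct and follows essentially the same route as the paper's: the same decomposition $S=S_1+S_2+S_3$, the same integration-by-parts identity $\|u'\|^2=\langle Au,u\rangle$ to control the first-order term, the same spectral bound $\|(P_nu)'\|\le\|u'\|$, and the same single integration by parts on $\langle S_1u,P_nu\rangle$ for \eqref{cond2}. Your explicit computation $\|f_1\|_\infty=1-b^2k'^2<1$ is a welcome addition, since the paper merely asserts $\|f_1\|_\infty<1$ without justification.
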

\begin{proof}
We show \eqref{cond1}.
We have
\begin{equation*}
\|S_1u\|\le \|f_1\|_\infty\|Au\|,\qquad \|S_3u||\le \|f_3\|_\infty \|u\|.
\end{equation*}
Moreover, for every $\epsilon>0$,
\begin{align*}
\|S_2u\|^2&=\int_0^{\pi/2} |f_2(\tau)|^2 |u'(\tau)|^2\,{\rm d}\tau\le \|f_2\|_\infty^2\int_0^{\pi/2} u'(\tau)\overline{u'(\tau)}\,{\rm d}\tau\\
&= -\|f_2\|_\infty^2\int_0^{\pi/2} u''(\tau)\overline{u(\tau)}\,{\rm d}\tau \le \|f_2\|_\infty^2\bigl(\epsilon^{-2}\|u\|^2+\epsilon^2\|Au\|^2\bigr).
\end{align*}
Therefore,
\[
\|Su\|\le \gamma\|u\|+\delta\|Au\|,\qquad \|Su\|^2\le \bigl(1+\epsilon^{-1}\bigr)\gamma^2\|u\|^2+(1+\epsilon)\delta^2\|Au\|^2,
\]
where
\[
\gamma= \|f_3\|_\infty+\epsilon^{-1}\|f_2\|_\infty,\qquad
\delta= \|f_1\|_\infty+\epsilon\|f_2\|_\infty.
\]
Since $\|f_1\|_\infty<1$, this implies \eqref{cond1} with $\beta<1$ if we choose $\epsilon>0$ sufficiently small.

We show \eqref{cond2}.
Let
\[ u=\sum_{\ell=0}^\infty c_\ell e_\ell \in D(A) .\]
Then
\[ P_n u(\tau)=\sum_{\ell=0}^n c_\ell e_\ell(\tau),\qquad \frac{{\rm d}}{{\rm d}\tau} P_n u(\tau)= -\sum_{\ell=1}^n 2\ell c_\ell\frac{2}{\sqrt\pi}\sin(2\ell \tau).\]
Therefore,
\begin{equation}\label{ineq}
 \|(P_n u)'\|^2=\sum_{\ell=1}^n 4\ell^2 |c_\ell|^2\le \sum_{\ell=1}^\infty 4\ell^2|c_\ell|^2=\|u'\|^2.
\end{equation}
Now
 \begin{align*}
\langle P_nS_1u,u\rangle=\langle S_1u,P_nu\rangle &{}=\int_0^{\pi/2} f_1(\tau)u''(\tau)\overline{P_n u(\tau)}\,{\rm d}\tau
 = -\int_0^{\pi/2} u'(\tau)\frac{{\rm d}}{{\rm d}\tau}\bigl( f_1(\tau)\overline{P_n u(\tau)}\bigr)\,{\rm d}\tau\\
&{}= - \int_0^{\pi/2} u'(\tau)f_1'(\tau)\overline{P_n u(\tau)}\,{\rm d}\tau-\int_0^{\pi/2} u'(\tau)f_1(\tau)\frac{{\rm d}}{{\rm d}\tau}\overline{P_n u(\tau)}\,{\rm d}\tau.
\end{align*}
Therefore, using \eqref{ineq}, for any $\epsilon>0$,
\begin{align*}
|\langle P_nS_1u,u\rangle|&\le \|f_1'\|_\infty \int_0^{\pi/2} |u'(\tau)||P_n u(\tau)|\,{\rm d}\tau +\|f_1\|_\infty \int_0^{\pi/2} |u'(\tau)|\biggl|\frac{{\rm d}}{{\rm d}\tau}P_n u(\tau)\biggr|\,{\rm d}\tau\\
&\le
\epsilon^{-1}\|f_1'\|_\infty\int_0^{\pi/2} |u(\tau)|^2\,{\rm d}\tau +
(\epsilon\|f_1'\|_\infty +\|f_1\|_\infty)\int_0^{\pi/2} |u'(\tau)|^2\,{\rm d}\tau\\
&= \epsilon^{-1}\|f_1'\|_\infty\langle u,u\rangle +(\epsilon\|f_1'\|_\infty+\|f_1\|_\infty) \langle Au,u\rangle .
\end{align*}
Moreover, we have
\begin{gather*}
|\langle P_n S_2u,u\rangle|\le \|f_2\|_\infty \bigl(\epsilon^{-1} \langle u,u\rangle +\epsilon \langle Au,u\rangle\bigr),\\
|\langle P_nS_3u,u\rangle|\le \|f_3\|_\infty \langle u,u\rangle.
\end{gather*}
By adding these estimates, we obtain \eqref{cond2}
with $\beta<1$ if we choose $\epsilon>0$ sufficiently small.
\end{proof}

Numerical example:
We take $a=3$, $b=2$, $c=1$. We compute the eigenvalue functions $H_0(\lambda)>H_1(\lambda)$ for system \eqref{SL1} and \eqref{bc1}
and $h_0(\lambda)<h_1(\lambda)$ for system \eqref{SL2} and \eqref{bc2} by the method from Theorem \ref{convergence} with $N=7$.
We choose $N=7$ because for larger values of $N$ (that should give us more accurate results) there is no change in the displayed values of the eigenvalues anymore. Currently we do not have strict error bounds for these computations.
The graphs of the eigenvalue functions are shown in Figure \ref{fig3}.
The $\lambda$-values of their intersection points give us eigenvalues of the Laplace--Beltrami operator $-\D$,
namely,
\[ \lambda_{0,0}=0,\qquad \lambda_{0,1}=1.074471\dots,\qquad \lambda_{1,0}=2.134154\dots,\qquad
\lambda_{1,1}=5.029767\dots .\]
The intersection points are computed using bisection or regula falsi (or one of its improved variants).

\begin{figure}[t]
\centering
\includegraphics[width=10cm]{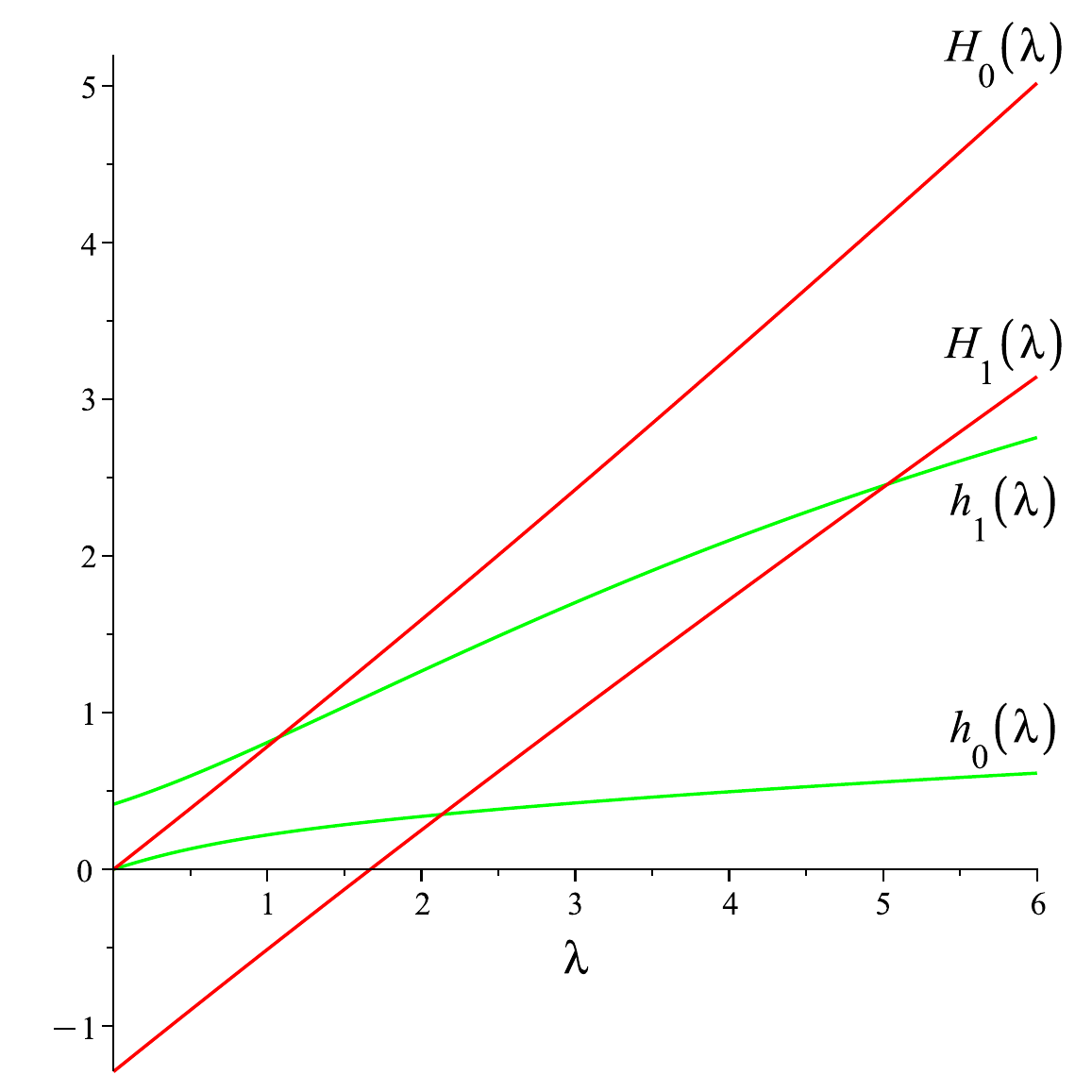}
\caption{Eigencurves $H_0(\lambda)$, $H_1(\lambda)$ (red) and $h_0(\lambda)$, $h_1(\lambda)$ (green) for $a=3$, $b=2$, $c=1$.
\label{fig3}}
\end{figure}

When working with the boundary conditions $w(0)=w\bigl(\frac12\pi\bigr)=0$ we use the basis $\sin(2n\tau)$, $n\in\N$.
We obtain the corresponding matrix representations for $D$, $C$, $B$ from \eqref{matrixD}--\eqref{matrixB}
by replacing $\cos$ by $\sin$ everywhere.
Note that these matrix representations agree with those with respect to $\cos(2n\tau)$ when we delete the
first row and first column of the latter, except for the three entries in the $i$-th row and $j$-th column
with $(i,j)=(1,1),(1,2),(2,1)$.
Similarly, when working with the boundary conditions
$w'(0)=w\bigl(\frac12\pi\bigr)=0$ we use the basis $\cos((2n+1)\tau)$,~${n\in\N_0}$.
The corresponding matrix representations are obtained from \eqref{matrixD}--\eqref{matrixB}
by replacing $n$ by $n+\frac12$. If the boundary conditions are $w(0)=w'\bigl(\frac12\pi\bigr)=0$, we use the basis $\sin((2n+1)\tau)$, $n\in\N_0$.
The corresponding matrix representations are obtained from \eqref{matrixD}--\eqref{matrixB}
by replacing $n$ by $n+\frac12$ and $\cos$ by $\sin$.
The matrix representations in the bases $\cos((2n+1)\tau)$ and $\sin((2n+1)\tau)$ only differ
in the six positions $(i,j)=(1,1), (2,1),(1,2),(3,1),(2,2),(3,1)$.

\section{Ellipsoids close to the unit sphere}\label{sphere2}

In this section, $k\in(0,1)$ is a fixed number and $k':=\sqrt{1-k^2}$. We consider the ellipsoid $\E(\epsilon)$ with semi-axes
\[ a=\bigl(1+k^2\epsilon\bigr)^{1/2},\qquad b=1,\qquad c=\bigl(1-k'^2\epsilon\bigr)^{1/2}\qquad \text{for} \quad 0<\epsilon<(k')^{-2}.\]
In the notation $\E(\epsilon)$, we suppressed the dependence of $\E(\epsilon)$ on $k$.
The given number $k$ agrees with the number $k$ from \eqref{k} associated with $\E(\epsilon)$.
If $\epsilon\to 0$, then $\E(\epsilon)$ approaches the unit sphere.
The functions $p(s)$ and $q(t)$ from \eqref{p} and \eqref{q} become
\[ p(s)=\bigl(1-k'^2\epsilon \cn^2(s,k')\bigr)^{1/2},\qquad q(t)=\bigl(1+k^2\epsilon \cn^2(t,k)\bigr)^{1/2} .\]
We denote the eigenvalue $\lambda_{m,n,\kappa}(a,b,c)$ for the ellipsoid $\E(\epsilon)$ by $\lambda_{m,n,\kappa}(\epsilon)$. Similarly, we denote the eigenvalue functions associated with \eqref{SL1}, \eqref{SL2}, \eqref{bc3}, \eqref{bc4},
\eqref{bc5} by
$H_{m,\kappa}(\lambda,\epsilon)$ and $h_{n,\kappa}(\lambda,\epsilon)$, respectively.
The corresponding two-parameter eigenvalue problems can be considered not only for
$0<\epsilon<k'^{-2}$ but also for $-k^{-2}<\epsilon<k'^{-2}$. If $\epsilon=0$, we obtain the eigenvalue problem determining Lam\'e polynomials
as mentioned in Section~\ref{sphere}. In particular, $\lambda_{m,n,\kappa}(0)=\Lambda_{m,n,\kappa}$
with $\Lambda_{m,n,\kappa}$ given by~\eqref{Lambda}.

\begin{Lemma}\label{derivatives1}
Let $m,n\in\N_0$ and $\kappa\in\{0,1\}^3$. The eigenvalue functions $H_{m,\kappa}(\lambda,\epsilon)$ and $h_{n,\kappa}(\lambda,\epsilon)$
are analytic at $\lambda=\Lambda_{m,n,\kappa}$ and $\epsilon=0$.
If $w(t)=W_{m,n,\kappa}(t)$ is a corresponding Lam\'e polynomial and $v(s)=w(K+{\rm i}K'-{\rm i}s)$, then
\begin{gather}
 \frac{\partial H_{m,\kappa}}{\partial \lambda}(\Lambda_{m,n,\kappa},0) =
\frac{\int_0^{K'} \dn^2(s,k')v(s)^2\,{\rm d}s}{\int_0^{K'}v(s)^2\,{\rm d}s}, \label{deriv1}\\
\frac{\partial H_{m,\kappa}}{\partial \epsilon}(\Lambda_{m,n,\kappa},0) =
\frac{k'^2\int_0^{K'} \cn(s,k') v(s)\frac{{\rm d}}{{\rm d}s}\bigl(\cn(s,k')\frac{{\rm d}v}{{\rm d}s}\bigr)\,{\rm d}s}{\int_0^{K'} v(s)^2\,{\rm d}s},\label{deriv2}\\
 \frac{\partial h_{n,\kappa}}{\partial \lambda}(\Lambda_{m,n,\kappa},0) =
\frac{k^2\int_0^K \sn^2(t,k)w(t)^2\,{\rm d}t}{\int_0^K w(t)^2\,{\rm d}t},\label{deriv3}\\
\frac{\partial h_{n,\kappa}}{\partial \epsilon}(\Lambda_{m,n,\kappa},0) =
\frac{k^2\int_0^K \cn(t,k) w(t)\frac{{\rm d}}{{\rm d}t}\bigl(\cn(t,k)\frac{{\rm d}w}{{\rm d}t}\bigr)\,{\rm d}t}{\int_0^K w(t)^2\,{\rm d}t}.\label{deriv4}
\end{gather}
\end{Lemma}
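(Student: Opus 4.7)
The plan has two components: establish joint analyticity of the eigencurves $H_{m,\kappa}(\lambda,\epsilon)$ and $h_{n,\kappa}(\lambda,\epsilon)$ at $(\Lambda_{m,n,\kappa},0)$, then compute the four partial derivatives by Feynman--Hellmann-type manipulations.

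For analyticity, I would adapt the argument of \cite{BV} already invoked in Section~\ref{twopara}. After the substitution ${\rm d}\tau/{\rm d}t=q(t)$ (now viewed jointly in $\epsilon$), equation \eqref{SL2} takes the form $-w''=(h+\lambda r(\tau;\epsilon))w$ with weight identically one, and similarly for \eqref{SL1}; these yield regular self-adjoint Sturm--Liouville operators depending holomorphically on $(\lambda,\epsilon)$ in a neighborhood of $(\Lambda_{m,n,\kappa},0)$. Since $h_{n,\kappa}(\Lambda_{m,n,\kappa},0)$ is a simple eigenvalue (it is the separation constant for a Lam\'e polynomial), Kato's analytic perturbation theorem, or, equivalently, the implicit function theorem applied to the characteristic equation for the Pr\"ufer angle of Section~\ref{prufer}, yields analyticity of $h_{n,\kappa}$, and the argument for $H_{m,\kappa}$ is identical.

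Formulas \eqref{deriv1} and \eqref{deriv3} are then immediate: they are the expressions for $H_m'(\lambda)$ and $h_n'(\lambda)$ displayed in Section~\ref{twopara}, specialized to $\epsilon=0$ (so that $p\equiv q\equiv 1$) with $w(t)=W_{m,n,\kappa}(t)$ and $v(s)=w(K+{\rm i}K'-{\rm i}s)$ as the unperturbed eigenfunctions.

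The substantive work is \eqref{deriv2}; formula \eqref{deriv4} is parallel, or follows from the substitution $t=K+{\rm i}K'-{\rm i}s$. I would differentiate \eqref{SL1} with respect to $\epsilon$ at $\epsilon=0$, using $\partial_\epsilon(1/p)|_{\epsilon=0}=\tfrac12 k'^2\cn^2(s,k')$, multiply by the unperturbed eigenfunction $v$, and integrate over $[0,K']$. All boundary terms vanish because \eqref{bc3}--\eqref{bc4} force $v$ or $v'$ to vanish at each endpoint, and the terms containing $v_\epsilon$ cancel after one integration by parts combined with the unperturbed equation $-v''+\lambda\dn^2(s,k')v=Hv$. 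What remains is the identity $(\partial H/\partial\epsilon)\int_0^{K'}v^2\,{\rm d}s=\tfrac12 k'^2\int_0^{K'}\cn^2(s,k')\bigl(v(s)v''(s)-v'(s)^2\bigr)\,{\rm d}s$, which becomes the right-hand side of \eqref{deriv2} after a further integration by parts that recasts the bracket as $-(\cn(s,k')v(s))'(\cn(s,k')v'(s))$ and then as $\cn(s,k')v(s)(\cn(s,k')v'(s))'$, producing the claimed factor $k'^2$. The main obstacle is the analyticity step; once that legitimizes differentiating the eigenvalue equation, the derivative formulas follow from routine Sturm--Liouville algebra.
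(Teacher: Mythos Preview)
Your proposal is correct and follows essentially the same route as the paper: the paper also differentiates the Sturm--Liouville equation with respect to the parameter, multiplies by the unperturbed eigenfunction, integrates (packaging this as the general formula \eqref{deriv}), and then uses the Lam\'e equation together with the identity $\cn^2(t,k)(-w'^2+ww'')=-(\cn^2(t,k)ww')'+2\cn(t,k)w(\cn(t,k)w')'$ to reach \eqref{deriv4}, with \eqref{deriv2} handled symmetrically. If anything, you are more careful than the paper about the analyticity step, which the paper's proof leaves implicit; your minor sign slip in writing the unperturbed equation for $v$ does not affect the argument.
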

\begin{proof}
Consider a regular Sturm--Liouville problem of the form
\begin{equation}\label{SL}
 -\frac{{\rm d}}{{\rm d}t}\biggl(P(t,\mu)\frac{{\rm d}w}{{\rm d}t}\biggr)+ Q(t,\mu) w= h R(t,\mu) w,\qquad \alpha\le t\le \beta,
\end{equation}
with separated boundary conditions at $t=\alpha$ and $t=\beta$ (we use only Neumann or Dirichlet conditions.)
The eigenvalue parameter is $h$ and the perturbation parameter is $\mu$. The coefficient functions $P(t,\mu)$,
$Q(t,\mu)$ and $R(t,\mu)$ are continuous with continuous partial derivatives $P_\mu$, $Q_\mu$ and $R_\mu$ with respect to $\mu$
for $t\in[\alpha,\beta]$ and $\mu$ is some interval $J$. Let $h(\mu)$ be the eigenvalue of this Sturm--Liouville problem
determined by a fixed oscillation number of a corresponding eigenfunction $w(t,\mu)$. The eigenfunction $w(t,\mu)$
satisfies initial conditions at $t=\alpha$ that are independent of $\mu$.
We differentiate \eqref{SL} with respect to $\mu$, multiply by $w(t,\mu)$, and integrate from $t=\alpha$ to $t=\beta$. We obtain
\begin{gather}
 \frac{{\rm d}h}{{\rm d}\mu} \int_\alpha^\beta\! R(t,\mu) w(t,\mu)^2\,{\rm d}t
 =\int_\alpha^\beta\! \bigl(P_\mu(t,\mu)w'(t,\mu)^2+(Q_\mu(t,\mu)-hR_\mu(t,\mu))w(t,\mu)^2\bigr)\,{\rm d}t.\!\label{deriv}
\end{gather}
If we apply \eqref{deriv} to our eigenvalue problems \eqref{SL1} and~\eqref{SL2} with boundary conditions~\eqref{bc3}--\eqref{bc5} and $\mu=\lambda$, we
immediately obtain \eqref{deriv1} and \eqref{deriv3}.
If we apply \eqref{deriv} to \eqref{SL2}, \eqref{bc2}, \eqref{bc3} and $\mu=\epsilon$, then we obtain
\[ \frac{\partial h_{n,\kappa}}{\partial \epsilon}(\lambda,0)=
\frac{k^2\int_0^K \cn^2(t,k) \bigl(-w'(t)^2+\bigl(\lambda k^2\sn^2(t,k)-h\bigr)w(t)^2\bigr)\,{\rm d}t}{2\int_0^K w(t)^2\,{\rm d}t},\]
where $\lambda=\Lambda_{m,n,\kappa}$.
This gives \eqref{deriv4} using Lam\'e's differential equation
\[w''+\bigl(h-\lambda k^2\sn^2(t,k)\bigr)w=0\]
and the identity
\[ \cn^2(t,k)\bigl(-w'^2+w w''\bigr)=-\bigl(\cn^2(t,k)w w'\bigr)'+2\cn(t,k)w(\cn(t,k)w')'. \]
Equation \eqref{deriv2} is proved similarly.
\end{proof}

\begin{thm}
Let $m,n\in\N_0$ and $\kappa\in\{0,1\}^3$. The function $\lambda_{m,n,\kappa}(\epsilon)$ is analytic at $\epsilon=0$
with derivative
\begin{equation}\label{derivatives3}
 \lambda_{m,n,\kappa}'(0)=
-\frac{ \frac{\partial H_{m,\kappa}}{\partial \epsilon}(\Lambda_{m,n,\kappa},0)-\frac{\partial h_{n,\kappa}}{\partial \epsilon}(\Lambda_{m,n,\kappa},0)}{ \frac{\partial H_{m,\kappa}}{\partial \lambda}(\Lambda_{m,n,\kappa},0)-\frac{\partial h_{n,\kappa}}{\partial \lambda}(\Lambda_{m,n,\kappa},0)},
\end{equation}
where the four partial derivatives on the right are given in Lemma $\ref{derivatives1}$.
\end{thm}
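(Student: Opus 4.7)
The goal is to realize $\lambda_{m,n,\kappa}(\epsilon)$ as the implicit solution of the equation
\[ F(\lambda,\epsilon):=H_{m,\kappa}(\lambda,\epsilon)-h_{n,\kappa}(\lambda,\epsilon)=0 \]
near $(\lambda,\epsilon)=(\Lambda_{m,n,\kappa},0)$, and then read off the derivative by implicit differentiation. The plan has three ingredients: analyticity of $F$, a nonvanishing partial derivative, and identification of the resulting branch.

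First, Lemma \ref{derivatives1} states that each of $H_{m,\kappa}$ and $h_{n,\kappa}$ is analytic at $(\Lambda_{m,n,\kappa},0)$, so $F$ is analytic there. By the definition of $\Lambda_{m,n,\kappa}$ as the common value of the two eigencurves on the sphere, $F(\Lambda_{m,n,\kappa},0)=0$.

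Second, I need to check that $\partial F/\partial\lambda \ne 0$ at $(\Lambda_{m,n,\kappa},0)$. Using formulas \eqref{deriv1} and \eqref{deriv3}, together with the pointwise bounds $\dn^2(s,k')>k^2$ on $(0,K')$ and $k^2\sn^2(t,k)<k^2$ on $(0,K)$, I get
\[ \frac{\partial H_{m,\kappa}}{\partial\lambda}(\Lambda_{m,n,\kappa},0)>k^2>\frac{\partial h_{n,\kappa}}{\partial\lambda}(\Lambda_{m,n,\kappa},0), \]
so $\partial F/\partial\lambda>0$ at the reference point. This is the analogue at $\epsilon=0$ of the separation \eqref{estder} used in Theorem \ref{t1}, and it is the only quantitative estimate the argument needs.

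Third, the analytic implicit function theorem now produces a unique analytic function $\lambda(\epsilon)$ with $\lambda(0)=\Lambda_{m,n,\kappa}$ and $F(\lambda(\epsilon),\epsilon)=0$ for $\epsilon$ in a neighborhood of $0$. To conclude $\lambda(\epsilon)=\lambda_{m,n,\kappa}(\epsilon)$, I invoke the uniqueness of intersection points of the two eigencurves from the analogue of Theorem \ref{t1} for general parity: for each $\epsilon$ near $0$ there is exactly one $\lambda$ with $H_{m,\kappa}(\lambda,\epsilon)=h_{n,\kappa}(\lambda,\epsilon)$, so the local IFT branch coincides with $\lambda_{m,n,\kappa}(\epsilon)$. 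Implicit differentiation of $F(\lambda_{m,n,\kappa}(\epsilon),\epsilon)=0$ at $\epsilon=0$ then yields
\[ \lambda_{m,n,\kappa}'(0)=-\frac{\partial F/\partial\epsilon}{\partial F/\partial\lambda}\bigg|_{(\Lambda_{m,n,\kappa},0)}, \]
which is precisely \eqref{derivatives3} once the partial derivatives are replaced by the explicit expressions in Lemma \ref{derivatives1}.

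The only nontrivial point in this plan is verifying the strict separation of the two $\lambda$-derivatives at $(\Lambda_{m,n,\kappa},0)$; everything else is formal once Lemma \ref{derivatives1} is in hand. The inequalities $\dn^2(s,k')>k^2>k^2\sn^2(t,k)$ are pointwise strict on open subintervals of positive measure, so the Rayleigh-type quotients in \eqref{deriv1} and \eqref{deriv3} are strictly separated by $k^2$, and the implicit function theorem applies without further effort.
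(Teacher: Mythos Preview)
Your proof is correct and follows essentially the same route as the paper: define $F(\lambda,\epsilon)=H_{m,\kappa}(\lambda,\epsilon)-h_{n,\kappa}(\lambda,\epsilon)$, use \eqref{deriv1} and \eqref{deriv3} to see that $\partial F/\partial\lambda>0$ at $(\Lambda_{m,n,\kappa},0)$, and apply the analytic implicit function theorem. Your write-up is in fact more explicit than the paper's, spelling out the $k^2$ separation of the two Rayleigh quotients and the identification of the IFT branch with $\lambda_{m,n,\kappa}(\epsilon)$ via the uniqueness of the eigencurve intersection.
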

\begin{proof}
The function $\lambda=\lambda_{m,n,\kappa}(\epsilon)$ solves
\[ H_{m,\kappa}(\lambda,\epsilon)-h_{n,\kappa}(\lambda,\epsilon) =0,\qquad \lambda(0)=\Lambda_{m,n,\kappa}.\]
It follows from \eqref{deriv1} and \eqref{deriv3} that the partial derivative of $H_{m,\kappa}(\lambda,\epsilon)-h_{n,\kappa}(\lambda,\epsilon)$ with respect to $\lambda$ at the point $(\Lambda_{m,n,\kappa},0)$ is positive.
Now the implicit function theorem for analytic functions shows that $\lambda(\epsilon)$ is analytic at $\epsilon=0$ and
also gives \eqref{derivatives3}.
\end{proof}

We introduce the differential operator
\[ Au=\frac{k'^2\cn(s,k')\frac{\partial}{\partial s}\bigl( \cn(s,k')\frac{\partial u}{\partial s}\bigr)-k^2\cn(t,k)\frac{\partial}{\partial t}\bigl( \cn(t,k)\frac{\partial u}{\partial t}\bigr)}
{\dn^2(s,k')-k^2\sn^2(t,k)}.
\]
Then \eqref{derivatives3} becomes
\[ \lambda'_{m,n,\kappa}(0)=-\frac{\langle Au,u\rangle_\Sp}{\langle u,u\rangle_\Sp},\]
where $u=U_{m,n,\kappa}$ is the sphero-conal harmonic \eqref{harmonic}, and
the inner product is
\[ \langle u_1,u_2\rangle_\Sp =\int_{-K'}^{K'}\int_{-2K}^{2K} \bigl(\dn^2(s,k')-k^2\sn^2(t,k)\bigr)u_1(s,t)u_2(s,t)\,{\rm d}t{\rm d}s.\]
Fix $\ell\in\N_0$ and $\kappa\in\{0,1\}^3$ such that $\ell-|\kappa|$ is nonnegative and even.
Let $V$ denote the vector space of spherical
harmonics homogeneous of degree $\ell$ with parity $\kappa$. The dimension $d$ of $V$ is $d=\frac12(\ell-|\kappa|)+1$.
Let $u_1,u_2,\dots,u_d$ be an orthonormal basis of $V$ with respect to the inner product $\langle\cdot,\cdot\rangle_\Sp$
consisting of sphero-conal harmonics.
It is easy to see that $\langle Au_i,u_j\rangle_\Sp=0$ if $i\ne j$, so the matrix $M_1$ with entries
$\langle Au_i,u_j\rangle_\Sp$ is diagonal and its diagonal entries are
$-\lambda_{m,n,\kappa}'(0)$ with $2m+2n+|\kappa|=\ell$.

The differential operator $A$ can be expressed in spherical coordinates and then agrees with
the operator $A_1$ in \cite{EK} with $\alpha=\frac12k^2$, $\beta=0$ and $\gamma=-\frac12k'^2$.
It is shown in \cite{EK} that the matrix~$M_2$ with entries $(\langle AU_i,U_j\rangle_\Sp)$ with $U_1,U_2,\dots,U_d$
denoting an orthonormal basis of $V$
derived from separation of variables in spherical coordinates is tridiagonal. This matrix $M_2$ is similar to
the diagonal matrix $M_1$. Therefore, the derivatives $-\lambda_{m,n,\kappa}'(0)$ with $2m+2n+|\kappa|=\ell$
are the eigenvalues of the matrix~$M_2$.
This shows that our formula~\eqref{derivatives3} is consistent with
\mbox{\cite[Theorem~4]{EK}}.

As an example, consider $\ell=2$ and $\kappa=(0,0,0)$.
Then the dimension of $V$ is two.
If $m=0$, $n=1$, the corresponding Lam\'e polynomial is \cite[p.~205]{A}
\[ w(t)=\sn^2(t,k)-\frac{1}{3k^2}\bigl(1+k^2+\sqrt{1-k^2k'^2}\bigr). \]
If $m=1$, $n=0$, we obtain the corresponding Lam\'e polynomial by replacing $+\sqrt{1-k^2k^2}$ by $-\sqrt{1-k^2k'^2}$.
We find from \eqref{derivatives3}
\begin{eqnarray*}
\lambda'_{0,1,(0,0,0)}(0)&=&2-4k^2-\tfrac87\sqrt{1-k^2k'^2},\\
\lambda'_{1,0,(0,0,0)}(0)&=&2-4k^2+\tfrac87\sqrt{1-k^2k'^2}.
\end{eqnarray*}
This result matches the result from \cite{EK}.

\subsection*{Acknowledgments}

The author thanks the referees whose remarks led to several improvements of the paper.

\pdfbookmark[1]{References}{ref}
\LastPageEnding

\end{document}